\documentclass[leqno,12pt]{article} 
\usepackage{amsmath, amssymb}
\usepackage{amsthm} 

\usepackage{graphicx}
\usepackage{float}

\theoremstyle{plain} 
\newtheorem{theorem}{\indent\sc Theorem}[section]
\newtheorem{lemma}[theorem]{\indent\sc Lemma}
\newtheorem{corollary}[theorem]{\indent\sc Corollary}
\newtheorem{proposition}[theorem]{\indent\sc Proposition}

\theoremstyle{definition} 
\newtheorem{definition}[theorem]{\indent\sc Definition}
\newtheorem{remark}[theorem]{\indent\sc Remark}

\makeatletter
\def\address#1#2{\begingroup
\noindent\parbox[t]{7.8cm}{%
\small{\scshape\ignorespaces#1}\par\vskip1ex
\noindent\small{\itshape E-mail address}%
\/: #2\par\vskip4ex}\hfill%
\endgroup}%
\makeatother
\title{\uppercase{Ollivier Ricci curvature on graphs obtained by removing edges from complete graphs}} 
\author{
\bigskip \\
\textsc{Yui Asai$^{*}$ and Taiki Yamada} 
}
\date{} 
\begin{document}

\maketitle

\footnote{ 
2020 \textit{Mathematics Subject Classification}.
Primary 05C12; Secondary 52C99.
}
\footnote{ 
\textit{Key words and phrases}.
Ricci curvature, Graph theory.
}
\footnote{ 
$^{*}$Corresponding author.
}
\begin{abstract}
Under what conditions does the sign of the Ollivier–Ricci curvature on a graph of a certain order change? In this paper, we discuss the curvature of graphs obtained by removing edges from complete graphs, as complete graphs have a stable positive curvature.
We defined graphs obtained by removing matching edges, the set of edges incident with the vertex, and cycle edges from  complete graphs, and then analyzed the Ollivier–Ricci curvature of those graphs.
The results show that the curvature of the graphs in the above three patterns is equal to the value obtained by dividing the number of triangles, including two vertices, by the maximum degree of the two vertices.  This result also indicates that the curvature of the above graphs is zero or positive.
This study concludes that the Ollivier–Ricci curvature is predicted to be positive even if some edges are removed from a complete graph, and we suggest that these discussions are suitable for investigating the conditions under which the sign of the Ollivier–Ricci curvature on a graph.

\end{abstract}

\section{Introduction}\label{Sec:Introduction}
The notion of curvature has emerged as a powerful organizing principle for analyzing complex networks~\cite{Lin-Lu-Yau,Ollivier2009,Sandhu2015}. Among several discretizations of Ricci curvature on graphs, the Ollivier--Ricci curvature has attracted particular attention owing to its optimal-transport foundation and ability to capture coarse geometric properties, such as robustness, expansion, and information flow~\cite{Jost-Liu, Ollivier2009,Ollivier2010}. Since its introduction by Ollivier~\cite{Ollivier2007,Ollivier2009}, this curvature has been widely applied in network science, data analysis, and discrete geometry, where it serves as a proxy for structural cohesion and resilience. ~\cite{NiLubichGao2019,Sandhu2015,Sia2019}.

A fundamental and well-understood benchmark in this theory is a complete graph. Complete graphs exhibit a strictly positive Ollivier--Ricci curvature on every edge under standard random-walk measures~\cite{Jost-Liu,Lin-Lu-Yau}. This positivity reflects their extreme connectivity: every pair of vertices shares all possible common neighbors, yielding the maximal local overlap in the probability measures used in the Wasserstein transport defining the curvature~\cite{Ollivier2009,Villani2009}. In this sense, complete graphs represent the ``maximally curved'' discrete spaces in the category of simple graphs.

At the opposite end of the spectrum, negatively curved graphs are associated with tree-like structures, bottlenecks, and sparse connectivity~\cite{BhattacharyaMukherjee2015,Jost-Liu,NiLubichLin2015}. Numerous studies have established qualitative relationships between the curvature sign and graph-theoretic properties such as expansion~\cite{KlartagKozmaRalli2016}, clustering~\cite{Sia2019}, and robustness under perturbations~\cite{NiLubichGao2019,Sandhu2015}. However, the precise structural transition from positive to non-positive curvature remains poorly understood. In particular, it is not known in a systematic way how much connectivity must be removed from a maximally connected graph and at which locations before the local geometry, as measured by the Ollivier--Ricci curvature, loses its positivity.

This gap is especially striking because edge deletion is the most elementary graph operation and models a wide range of realistic processes, such as link failures in infrastructure networks, pruning in communication graphs, sparsification for computational efficiency~\cite{SpielmanTeng2011}, and evolutionary degradation of connectivity~\cite{NiLubichGao2019}. Therefore, from both theoretical and applied perspectives, the following question arises:

\begin{quote}
\textit{Which edges, and how many, must be removed from a complete graph so that some edges attain non-positive Ollivier--Ricci curvature?}
\end{quote}

This question lies at the intersection of discrete differential geometry and extremal graph theory~\cite{Bollobas2004}. It seeks a threshold phenomenon: a characterization of minimal edge removals that induce a sign change in the curvature. Unlike global graph invariants, the Ollivier--Ricci curvature is inherently local and depends delicately on the combinatorial configuration of common neighbors around each edge~\cite{Jost-Liu,Lin-Lu-Yau}. Consequently, both the number of removed edges and their placement play a decisive role.

Understanding this transition has several important implications for the field. First, it provides a geometric interpretation of graph sparsification by identifying how far a dense graph can be thinned while preserving positive curvature, hence maintaining strong local cohesion~\cite{Sandhu2015,SpielmanTeng2011}. Second, it offers a new viewpoint on network vulnerability, as edges with non-positive curvature often correspond to structural bottlenecks~\cite{NiLubichGao2019,Sia2019}. Third, it contributes to the broader program of relating discrete curvature to classical extremal problems, connecting curvature theory with questions traditionally studied in combinatorics~\cite{Bollobas2004,Lin-Lu-Yau}.

In this work, we initiate a systematic study of edge deletions from complete graphs from the viewpoint of the Ollivier--Ricci curvature. We analyze how the curvature of an edge depends on the combinatorial pattern of missing adjacent edges, and we derive the conditions under which the curvature becomes zero or negative. Our results reveal that the transition from positive to non-positive curvature is governed not only by the total number of deleted edges but also by specific local configurations that disrupt common-neighbor structures.

By characterizing these configurations, we provide the first structural description of when a complete graph, after edge removal, ceases to be positively curved in the sense of Ollivier. This establishes a concrete bridge between discrete curvature and extremal graph modifications and opens a pathway toward a curvature-based theory of graph robustness and sparsification.

The remainder of this paper is organized as follows.
In Section \ref{Preliminaries}, we introduce the necessary concepts of graph theory and define some edge sets contained in complete graphs.  In Section \ref{Ricci Curvature}, we describe the Ollivier--Ricci curvature and both upper and lower bounds for the curvature on graphs. In Section \ref{Main Results}, we show the curvature of some graphs obtained by removing matching edges, adjacent edges to a vertex, and cycle edges from complete graphs. In Section \ref{Conclusion}, we provide a summary of the curvature on above graphs.

\section{Preliminaries}
\label{Preliminaries}
In this paper, we assume that $G=(V,E)$ is an undirected graph, where $V$ is the set of the vertices and $E$ the set of edges. For any subset $F \subset E$, $G - F$ denotes the graph obtained by removing the edges contained in $F$.

\begin{definition}
For any two vertices $s$ and $t$, a \textit{path} between $s$ and $t$ represents a sequence of edges $\left\{ (v_i, v_{i+1}) \right\}_{i=0}^{n-1}$, where $v_0 = s,\ v_n = t$.
\end{definition}

\begin{remark}
If there exists a path between $s$ and $t$, then $s$ and $t$ is called \textit{connected}. In particular, if any pair of vertices in $V$ is connected, then $G=(V, E)$ is called \textit{connected}. 
\end{remark}

\begin{definition}
The {\em distance} $d(u, v)$ between two vertices $u, v \in V$ is given by the length of a shortest path between $u$ and $v$.    
\end{definition}

\begin{definition}
For any vertex $u \in V$, the \textit{neighborhood} of $u$ is defined by
\begin{eqnarray*}
\Gamma (u) = \left\{ v \in V \mid (u, v) \in E \right\}.
\end{eqnarray*}
\end{definition}

\begin{definition}
For any vertex $u \in V$, the \textit{degree} of $u$, denoted by $d_u$, is the number of edges incident to $u$.
\end{definition}

We assume the following conditions:
\begin{enumerate}
\renewcommand{\labelenumi}{(\arabic{enumi})}
\item Local finiteness 
\item Simpleness (i.e., there exist no loops and no multi-edges)
\item Connectedness 
\end{enumerate}

\begin{definition}
    Let $G$ be a graph. A $t$-\textit{matching} of $G$ is a set of $t$ edges that do not share any common vertex.
    We denote $\lbrace te\rbrace$ as the edge set of $t-matching$.\par 
    A perfect matching $M$ of a graph $G$ is a set of non-adjacent edges such that every vertex of $G$ is incident to exactly one edge of the matching $M$.
\end{definition}

\begin{definition}
    Let $G$ be a graph. Choosing a vertex of $G$ and $t$ adjacent edges to the vertex, we denote the set of edges as $[te]$.  
\end{definition}

\section{Ricci Curvature}
\label{Ricci Curvature}
\begin{definition}
The 1-Wasserstein distance between any two probability measures $\mu$ and $\nu$ on $V$ is given by
	\begin{eqnarray*}
	W(\mu, \nu) = \inf_{A} \sum_{u, v \in V}A(u, v)d(u, v),
	\end{eqnarray*}
	where $A : V \times V \to [0, 1]$ runs over all maps satisfying 
	\begin{eqnarray}
	\label{coupling}
		\begin{cases}
		\sum_{v \in V}A(u, v) = \mu(u),\\
		\sum_{u \in V}A(u, v) = \nu(v).
		\end{cases}
	\end{eqnarray}
Such a map $A$ is called a {\em coupling} between $\mu$ and $\nu$. 
\end{definition}

\begin{proposition}[Kantorovich, Rubinstein]
\label{kantoro}
The $1$-Wasserstein distance between any two probability measures $\mu$ and $\nu$ on $V$ is written as
		\begin{eqnarray*}
		W(\mu, \nu) = \sup_{f} \sum_{u \in V} f(u)(\mu(u) - \nu(u)),
		\end{eqnarray*}
where the supremum is taken over all functions $f$ on $V$ that satisfy $|f(u) - f(v)| \leq d(u, v)$ for any $u, v \in V$.
	\end{proposition}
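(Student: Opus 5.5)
We prove the Kantorovich--Rubinstein duality on the finite (or locally finite, with finitely supported measures) vertex set $V$ by linear programming duality. The idea is to first get the easy inequality from the Lipschitz condition, and then the reverse inequality from LP duality.

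\textbf{Step 1 (weak duality).} Take any coupling $A$ satisfying (\ref{coupling}) and any $f$ with $|f(u)-f(v)|\le d(u,v)$. Using the marginal conditions,
\begin{eqnarray*}
\sum_{u} f(u)(\mu(u)-\nu(u)) = \sum_{u,v} A(u,v)\bigl(f(u)-f(v)\bigr) \le \sum_{u,v} A(u,v) d(u,v).
\end{eqnarray*}
Taking the supremum over $f$ and the infimum over $A$ gives $\sup_f \le W(\mu,\nu)$.

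\textbf{Step 2 (reduction to a finite LP).} Restrict to the finite set $S=\operatorname{supp}\mu\cup\operatorname{supp}\nu$. Any optimal coupling only charges $S\times S$, and $d$ restricted to $S$ is still a metric. The primal problem is to minimize $\sum A(u,v)d(u,v)$ subject to $A\ge 0$ and the two marginal equalities. The feasible set is nonempty (it contains $\mu\otimes\nu$) and compact, so the infimum is attained.

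\textbf{Step 3 (strong duality).} The LP dual has variables $\varphi,\psi$ on $S$ and asks to maximize $\sum\varphi\mu+\sum\psi\nu$ subject to $\varphi(u)+\psi(v)\le d(u,v)$. By LP strong duality the two optimal values coincide.

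\textbf{Step 4 ($c$-transform).} This is the main point: turning the pair $(\varphi,\psi)$ into a single $1$-Lipschitz $f$ with $\psi=-f$. Replace $\varphi$ by $f(u)=\min_{v\in S}(d(u,v)-\psi(v))$. Then $f\ge\varphi$, and $f$ is $1$-Lipschitz on $S$ because it is a minimum of $1$-Lipschitz functions. The constraint $f(u)+\psi(v)\le d(u,v)$ still holds. Setting $v=u$ in the constraint gives $\psi\le -f$, and $-f$ remains feasible since $f(u)-f(v)\le d(u,v)$. Hence the dual value is at most $\sum f(\mu-\nu)$.

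\textbf{Step 5 (extension).} Extend $f$ from $S$ to all of $V$ by the McShane formula $\tilde f(x)=\min_{s\in S}(f(s)+d(x,s))$, which preserves the Lipschitz constant. This yields $W(\mu,\nu)\le\sup_f\sum f(\mu-\nu)$ and completes the proof.
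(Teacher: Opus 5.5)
The paper gives no proof of this proposition. It is quoted as the classical Kantorovich--Rubinstein theorem and is never invoked afterwards: the paper's lower bounds on $\kappa$ come from the Jost--Liu lower bound or from explicit couplings (upper bounds on $W$ taken straight from the definition), and its upper bounds come from the Jost--Liu estimate. Any lower bound on $W$ obtained from a test function would need only your Step 1.

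Your argument is therefore a genuinely different, self-contained route. It runs through weak duality, reduction to a finite transportation LP, and LP strong duality. The $c$-transform $f(u)=\min_{v\in S}(d(u,v)-\psi(v))$ then collapses a dual pair $(\varphi,\psi)$ to $(f,-f)$ with $f$ $1$-Lipschitz, and a McShane extension carries $f$ from $S$ to $V$. Each step checks out. In Step 2 you can even say that \emph{every} coupling is supported on $\operatorname{supp}\mu\times\operatorname{supp}\nu$, since $A\ge 0$. This covers everything the paper needs, because all graphs in the main results are finite and all measures are the finitely supported $m_x$.

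What the citation buys that your proof does not is the statement in the generality in which it is literally written. On an infinite locally finite graph with infinitely supported $\mu,\nu$, Steps 2--5 do not apply as written. There you must also assume finite first moments, since otherwise $\sum_u f(u)(\mu(u)-\nu(u))$ need not converge for unbounded $1$-Lipschitz $f$. You then pass to a limit, for example by moving the mass of $\mu$ and $\nu$ lying outside a large finite set $S_k$ to a base point $o$. This changes $W$ (by the triangle inequality for $W$) and the dual functional (uniformly in $f$) by at most $\sum_{u\notin S_k} d(u,o)(\mu(u)+\nu(u))$, which tends to $0$.
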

A function $f$ on $V$ is said to be {\em $1$-Lipschitz} if $|f(u)-f(v)| \leq d(u,v)$ for any $u, v \in V$.

\begin{definition}
For any vertex $x \in V$, we define a probability measure $m_x$ on $V$ by
	\begin{eqnarray*}
    	m_x(v) = 
    		\begin{cases}
    		\cfrac{1}{d_{x}}, &\mathrm{if}\ (x, v) \in E, \\
    		0, & \mathrm{\mathrm{otherwise}}.
    		\end{cases}
   	\end{eqnarray*}
\end{definition}

\begin{definition}
	\label{Ricci}
    For any vertices $x, y \in V$, let $\mu_x$ and $\nu_y$ be probability measures on $V$.
    Then, the {\em Ollivier Ricci curvature} of $x$ and $y$ is defined as
   	\begin{eqnarray*}
    	\kappa(x, y) =  1 - \cfrac{W(\mu_x, \nu_y)}{d(x, y)}.
   	\end{eqnarray*}
\end{definition}

\begin{theorem}[\cite{Jost-Liu}]
\label{リッチ曲率の下限}
For any pair of neighboring vertices $x, y \in V$, the number of triangles including $x$ and $y$ as vertices is denoted by $\# (x, y)$. Then we have
\begin{eqnarray*}
\kappa (x, y) \geq - \left( 1- \cfrac{1}{d_x} - \cfrac{1}{d_y} - \cfrac{\# (x, y)}{d_x \wedge d_y} \right)_+ - \left( 1- \cfrac{1}{d_x} - \cfrac{1}{d_y} - \cfrac{\# (x, y)}{d_x \vee d_y} \right)_+ + \cfrac{\# (x, y)}{d_x \vee d_y},
\end{eqnarray*}
where $s_+ := \max (s, 0)$, $s \vee t := \max (s, t)$, and $s \wedge t := \min (s, t)$ for real numbers $s$ and $t$.
\end{theorem}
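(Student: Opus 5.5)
We prove the Jost--Liu lower bound with $\mu_x=m_x$ and $\nu_y=m_y$ for an edge $(x,y)$, so $d(x,y)=1$ and $\kappa(x,y)=1-W(m_x,m_y)$. The plan is to build one explicit coupling $A$ between $m_x$ and $m_y$ and estimate its cost, which bounds $W$ from above. Assume without loss of generality $d_x\le d_y$, so $d_x\wedge d_y=d_x$ and $d_x\vee d_y=d_y$. Write $\#=\#(x,y)$ and let $T=\Gamma(x)\cap\Gamma(y)$ be the set of common neighbours, so $|T|=\#$. Partition the supports:
\[
\Gamma(x)=\{y\}\cup T\cup N_x,\qquad \Gamma(y)=\{x\}\cup T\cup N_y,
\]
with $|N_x|=d_x-1-\#$ and $|N_y|=d_y-1-\#$.

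The transport plan has four pieces.

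\begin{enumerate}
\item On each common neighbour $z\in T$, leave mass $1/d_y$ in place at cost $0$. This is possible because $m_x(z)=1/d_x\ge 1/d_y=m_y(z)$. It contributes exactly the term $\#/(d_x\vee d_y)$.
\item The leftover mass $1/d_x-1/d_y$ at each $z\in T$ still has to move, as does the mass at $N_x$ and at $y$. The mass $m_x(y)=1/d_x$ sitting at $y$ is a source; the target $m_y(x)=1/d_y$ sits at $x$. Mass at $x$ or $y$ can reach any vertex of $\Gamma(y)\cup\Gamma(x)$ at distance at most $1$. Likewise, excess at $z\in T$ can reach $x$ or $y$ at cost $1$.
\item Move the remaining mass at distance $1$ whenever a source is adjacent to a target, namely sources in $\{y\}\cup T$ and targets in $\{x\}\cup T$.
\item Everything else moves from $N_x$ to $N_y$ at distance at most $3$, using the path $N_x\to x\to y\to N_y$.
\end{enumerate}

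Plain counting of this plan gives
\[
W\le 1-\frac{\#}{d_y}+2\cdot(\text{mass forced through distance }3\text{ minus what is already paid}).
\]
The point is to show that the mass which must travel the extra distance $2$ is at most
\[
\Bigl(1-\frac1{d_x}-\frac1{d_y}-\frac{\#}{d_x}\Bigr)_+ \quad\text{and}\quad \Bigl(1-\frac1{d_x}-\frac1{d_y}-\frac{\#}{d_y}\Bigr)_+ ,
\]
each counted with the correct coefficient. Once that is shown, $\kappa=1-W$ gives exactly the stated inequality.

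The main obstacle is this bookkeeping step: splitting the mass that cannot be sent at distance $\le 1$ into two portions whose sizes are governed by those two positive parts. I would handle it by a case split on the signs of the two bracketed quantities.

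\begin{itemize}
\item Mass at $N_x$ not absorbed locally is sent first to $x$ (distance $1$), but only up to $1/d_y$.
\item Target mass at $N_y$ is fed from $y$ (mass $1/d_x$) and from the excess at $T$ (total $\#(1/d_x-1/d_y)$) at distance at most $2$.
\item Only the residual travels distance $3$.
\end{itemize}

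Computing the residuals in each sign case yields the two $(\cdot)_+$ terms. If both positive parts vanish, everything moves within distance $2$ and the bound reduces to $\kappa\ge \#/d_y$.

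Alternatively, one may verify the bound through Proposition~\ref{kantoro}. Any 1-Lipschitz $f$ can be normalised with $f(x)=0$, $f(y)\le1$, and one then bounds $\sum f(m_x-m_y)$ by the same case analysis. I expect the coupling route to be cleaner.
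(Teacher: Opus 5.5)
The paper gives no proof of this statement; it is quoted from Jost--Liu, who prove it with an explicit transport plan. The routing in your bullet list is the right one: keep $1/d_y$ on each common neighbour, send at most $1/d_y$ from $N_x$ to $x$, feed $N_y$ from $y$ and then from the excess on $T$, and send only the remainder $N_x\to N_y$.

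The gap is that you stop at the step that carries the content of the theorem, and the accounting you do write down is wrong. At most $\#/d_y$ can stay in place, so every other unit travels at least distance $1$. The cost of your plan is therefore $W\le 1-\#/d_y+f_2+2f_3$, where $f_2$ is the mass sent $T\to N_y$ (distance $\le 2$) and $f_3$ the mass sent $N_x\to N_y$ (distance $\le 3$). Your formula charges only the distance-$3$ part. Your remark that when both positive parts vanish ``everything moves within distance $2$'' and one still gets $\kappa\ge\#/d_y$ is also wrong. $W\le 1-\#/d_y$ forces every moving unit to travel distance exactly $1$. That is indeed what happens, since then $|N_x|/d_x\le 1/d_y$ and $|N_y|/d_y\le 1/d_x$. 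Smaller slips: $m_x(x)=0$; after step~1 the targets on $T$ are already saturated; and $y$ is at distance $2$, not $1$, from $N_x$.

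The missing computation needs no case split. Put
\[
a=\frac{|N_x|}{d_x}-\frac{1}{d_y}=1-\frac{1}{d_x}-\frac{1}{d_y}-\frac{\#}{d_x},\qquad b=\frac{|N_y|}{d_y}-\frac{1}{d_x}=1-\frac{1}{d_x}-\frac{1}{d_y}-\frac{\#}{d_y}.
\]
Then $b-a=\#\bigl(\frac{1}{d_x}-\frac{1}{d_y}\bigr)$ is exactly the excess mass on $T$, and it is nonnegative precisely because $d_x\le d_y$.

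In your routing, the part of $N_y$'s demand that $y$ cannot cover is $f_2+f_3=b_+$. The residual left after also using the $T$-excess is $f_3=(b-(b-a))_+=a_+$. By mass balance this equals the overflow of $N_x$ beyond $x$'s demand $1/d_y$, so the two descriptions of the distance-$3$ flow agree. The inequality $b\ge a$ guarantees $f_2=b_+-a_+\in[0,\,b-a]$, so the plan is feasible. Whatever remains at $y$ and on $T$ then fills $x$ exactly, at distance $1$.

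Hence $f_2+2f_3=a_++b_+$, and $\kappa=1-W\ge \#/d_y-a_+-b_+$, which is the claim once each block-to-block flow is spread proportionally over the vertices. Until you actually establish $f_3=a_+$, $f_2+f_3=b_+$, and the role of $b\ge a$, the proposal is an outline rather than a proof.
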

Note that any tree attains the lower bound. This coincides with the geometric intuition of curvature. On the other hand, Jost and Liu provided the following upper bound for the Ricci curvature.
\begin{theorem}[\cite{Jost-Liu}]
\label{リッチ曲率の上限}
For any pair of neighboring vertices $x, y \in V$, we have
\begin{eqnarray*}
\kappa(x, y) \leq \cfrac{ \# (x, y)}{d_{x} \vee d_{y}}.
\end{eqnarray*}
\end{theorem}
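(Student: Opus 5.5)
We prove the upper bound of Theorem \ref{リッチ曲率の上限} by the Kantorovich--Rubinstein duality (Proposition \ref{kantoro}). For the pair $x,y$ with $d(x,y)=1$ we have
\begin{eqnarray*}
\kappa(x,y)=1-W(m_x,m_y),
\end{eqnarray*}
so it suffices to show
\begin{eqnarray*}
W(m_x,m_y)\ge 1-\cfrac{\#(x,y)}{d_x\vee d_y}.
\end{eqnarray*}
By duality, this follows once we exhibit a single $1$-Lipschitz function $f$ with $\sum_u f(u)(m_x(u)-m_y(u))\ge 1-\#(x,y)/(d_x\vee d_y)$. By the symmetry $x\leftrightarrow y$ (which changes the sign of the pairing, compensated by replacing $f$ with $-f$), we may assume $d_x\ge d_y$, so $d_x\vee d_y=d_x$.

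For the construction we split the vertices near the edge: $\Gamma(x)=\{y\}\cup T\cup A$, where $T=\Gamma(x)\cap\Gamma(y)$ with $|T|=\#(x,y)$ and $A=\Gamma(x)\setminus(\Gamma(y)\cup\{y\})$. Symmetrically $\Gamma(y)=\{x\}\cup T\cup B$. The natural test function rewards mass sitting at the neighbours of $x$ that lie far from $y$:
\begin{eqnarray*}
f(v)=\begin{cases} 1, & v\in\{x\}\cup A,\\ 0, & \text{otherwise}. \end{cases}
\end{eqnarray*}
This $f$ is $1$-Lipschitz because it takes only the values $0$ and $1$ and the graph metric is integer valued with $d(u,v)\ge1$ for $u\ne v$.

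We then compute the pairing. We have $\sum_u f(u)m_x(u)=|A|/d_x=(d_x-1-\#(x,y))/d_x$, since $f(x)=0$ contributes nothing under $m_x$ (as $m_x(x)=0$). Also $\sum_u f(u)m_y(u)=m_y(x)=1/d_y$, because $A\cap\Gamma(y)=\emptyset$. This gives
\begin{eqnarray*}
\sum_u f(u)(m_x(u)-m_y(u))=1-\cfrac{1}{d_x}-\cfrac{\#(x,y)}{d_x}-\cfrac{1}{d_y}.
\end{eqnarray*}
This falls short of the target by $1/d_x+1/d_y$, so the crude $0/1$ function is not good enough. This is the main obstacle: $f$ must also exploit $x$ and $y$ themselves.

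To fix this we use three values. Set $f=2$ on $A$, $f=1$ on $\{x\}\cup T$ and on the vertices of $\Gamma(y)$ reachable appropriately, and $f=0$ on $\{y\}\cup B$; extend to all other vertices by taking the largest $1$-Lipschitz extension capped consistently, for instance $f(v)=\min(2,\,d(v,y))$. This $f$ is automatically $1$-Lipschitz, and it gives $f=1$ on $\{x\}\cup T\cup B$ wait—no: $f(B)=d(B,y)=1$. Let us recompute with $f(v)=d(v,y)\wedge 2$. Under $m_x$ we get $f(y)=0$, $f(T)=1$ and $f(A)=2$, so $\mathbb E_{m_x}f=(\#(x,y)+2|A|)/d_x$. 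Under $m_y$ every neighbour has $f=1$, so $\mathbb E_{m_y}f=1$. The difference is
\begin{eqnarray*}
\cfrac{\#(x,y)+2(d_x-1-\#(x,y))}{d_x}-1=1-\cfrac{2}{d_x}-\cfrac{\#(x,y)}{d_x},
\end{eqnarray*}
which is still short by $2/d_x$. The remaining loss comes from the mass at $y$ (under $m_x$) and at $x$ (under $m_y$), and recovering it is exactly where a sharper argument is needed. At this point I would instead bound $W$ directly through couplings: in any coupling, the mass $1/d_x$ at each vertex of $A$ must travel distance at least $1$ unless it lands on that same vertex, which is impossible since $m_y$ vanishes there. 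Mass at $T$ can stay, giving at most $\#(x,y)/d_x$ worth of free transport. I would then try to show that every coupling moves total mass at least $1-\#(x,y)/d_x$ a distance of at least $1$, by counting the mass of $m_x$ not matched in place. The matched-in-place mass is at most $\sum_v \min(m_x(v),m_y(v))\le \#(x,y)/d_x+\min(1/d_x,1/d_y)$, where the extra term comes from $v\in\{x,y\}$; that term is the delicate point and may require the more refined dual function above.
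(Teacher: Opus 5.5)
Your proposal stops short of a proof. Both test functions you try fall short, and the closing coupling argument halts at a ``delicate point'' that rests on a false premise. You suggest the in-place mass $\sum_v \min(m_x(v),m_y(v))$ may contain an extra $\min(1/d_x,1/d_y)$ coming from $v\in\{x,y\}$. It does not. The graph has no loops, so $m_x(x)=0$ and $m_y(y)=0$. Hence $\min(m_x(x),m_y(x))=\min(m_x(y),m_y(y))=0$, and the supports $\Gamma(x)$ and $\Gamma(y)$ meet exactly in $T$. This gives $\sum_v\min(m_x(v),m_y(v))=\#(x,y)\min(1/d_x,1/d_y)=\#(x,y)/(d_x\vee d_y)$. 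Every coupling $A$ satisfies $A(v,v)\le\min(m_x(v),m_y(v))$, and $d(u,v)\ge 1$ for $u\ne v$. So its cost is at least $\sum_{u\ne v}A(u,v)=1-\sum_v A(v,v)\ge 1-\#(x,y)/(d_x\vee d_y)$, which is the theorem. The step you flagged as delicate is exactly where the reasoning went wrong, and once corrected it finishes the proof in one line.

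On the dual side, your first indicator uses the wrong endpoint of the edge. Take $f=\mathbf{1}_{\{y\}\cup A}$, i.e.\ the indicator of $\Gamma(x)\setminus\Gamma(y)$, instead of $\mathbf{1}_{\{x\}\cup A}$. It is $1$-Lipschitz for the reason you gave. It satisfies $\sum_u f(u)m_x(u)=(1+|A|)/d_x=1-\#(x,y)/d_x$, and $\sum_u f(u)m_y(u)=0$ because $f$ vanishes on $\{x\}\cup T\cup B$. Your version charged $m_y(x)=1/d_y$ and forfeited $m_x(y)=1/d_x$, which is precisely the deficit $1/d_x+1/d_y$ you computed. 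The function $d(\cdot,y)\wedge 2$ fails for the same reason: it gives $y$, which carries $m_x$-mass $1/d_x$, the value $0$. The corrected bound $W(m_x,m_y)\ge 1-\#(x,y)/d_x$ holds with either endpoint in the role of $x$, so applying it with the endpoint of larger degree yields the claim, with no case distinction needed. The paper gives no proof of its own and defers to Jost--Liu; the argument above (indicator function plus Kantorovich duality, or equivalently the total-variation bound) is the standard one.
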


\section{Main Results}
\label{Main Results}
The main result of this paper is the determination of how the Ollivier-Ricci curvature changes when edges are removed from a complete graph using three different methods. In this section, we discuss each of these methods. 
Since the labeling scheme assigned to the vertices of the graph differs for each method, we explain the labeling scheme at the beginning of each subsection.

\subsection{Case where matching edges are removed}
In this subsection, we consider a graph $K_{n}-\lbrace me\rbrace$ obtained by removing matching edges from a complete graph. First, we relabeled the vertices according to their degree (see Figure \ref{マッチングエッジの例}). Let $v_{i}\ (i=1,\ldots,n-2m)$ denote the vertex with degree $n-1$, and $u_{i}\ (i=1,\ldots,2m)$ denote the vertex with degree $n-2$ on $K_{n}-\lbrace me\rbrace$. 
\begin{figure}[H]
\centering
\includegraphics[height=6cm]{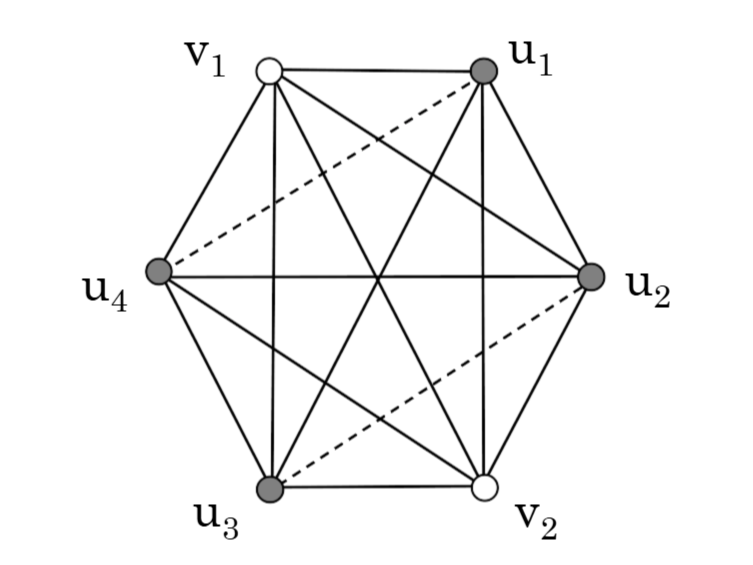}
\caption{$K_{6}-\lbrace2e\rbrace$}
\label{マッチングエッジの例}
\end{figure}

We provide the following lemma concerning the number of triangles in the case of a graph excluding matching edges to calculate the Ollivier–Ricci curvature.

\begin{lemma}
\label{マッチングの三角形数}
    We consider a graph $K_{n}-\lbrace me\rbrace$$(1\leq m\leq\lfloor n/2\rfloor)$.
   Then for any pair of neighboring two vertices,
    \begin{gather*}
        \#(v_{i},v_{j})=n-2, \\
        \#(v_{i},u_{j})=n-3,\\
        \#(u_{i},u_{j})=n-4.
    \end{gather*}
\end{lemma}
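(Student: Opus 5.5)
The count $\#(x,y)$ of triangles containing the adjacent pair $x,y$ is the number of common neighbours $|\Gamma(x)\cap\Gamma(y)|$. This is because a triangle on $x,y$ is determined by a third vertex $w$ adjacent to both. So the plan is to compute $|\Gamma(x)\cap\Gamma(y)|$ directly in $K_n-\{me\}$.

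We first record the neighbourhoods. The removed edges form an $m$-matching, which we write as $\{(u_{2k-1},u_{2k})\}_{k=1}^{m}$, relabelling the $u_i$ if necessary. Then each $v_i$ is adjacent to all other $n-1$ vertices. Each $u_i$ is adjacent to every vertex except itself and its unique matching partner $u_i'$, so $\Gamma(u_i)=V\setminus\{u_i,u_i'\}$.

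For a pair of neighbouring vertices $x,y$, a common neighbour $w$ must lie in $V\setminus\{x,y\}$, a set of $n-2$ vertices. We then subtract the vertices of this set that are non-adjacent to $x$ or to $y$; these are exactly the matching partners of $x$ and $y$, when they exist. We split into three cases.
\begin{itemize}
\item For $(v_i,v_j)$ neither vertex has a partner, so nothing is subtracted and $\#=n-2$.
\item For $(v_i,u_j)$ only $u_j'$ is excluded. It lies in $V\setminus\{v_i,u_j\}$ because $u_j'\neq v_i$. Hence $\#=n-3$.
\item For $(u_i,u_j)$ adjacency forces $u_j\neq u_i'$. The matching property then gives $u_i'\neq u_j'$, and neither partner equals $u_i$ or $u_j$. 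So two distinct vertices are removed and $\#=n-4$.
\end{itemize}

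The only point needing care is this last case. There we use that the removed edges are disjoint, so that the two excluded partners are distinct and both lie outside $\{u_i,u_j\}$. This is where the matching hypothesis enters. We also note that the case $(u_i,u_j)$ with $u_i,u_j$ adjacent can only occur when $m\ge 2$, so the third formula is vacuous when $m=1$. Beyond this the argument is pure counting and needs no earlier results.
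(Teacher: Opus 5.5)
Your proposal is correct and follows essentially the same route as the paper: a direct count of common neighbours, removing the matching partners from the candidate third vertices. The only difference is that you explicitly use the disjointness of the matching edges to show that, in the $(u_i,u_j)$ case, the two excluded partners are distinct and lie outside $\{u_i,u_j\}$, which the paper leaves implicit.
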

\begin{proof}
    First, we consider any pair of neighboring vertices $v_{i},v_{j}$. We know that vertex $v_{j}$ has $n-2$ neighbors, except for $v_{i}$, and $v_{i}$ is adjacent to all of them. Therefore,
    \begin{eqnarray*}
        \#(v_{i},v_{j})=n-2.
    \end{eqnarray*}
    
    Second, we consider any pair of neighboring vertices $v_{i},u_{j}$. We know that $u_{j}$ has $n-3$ neighbors, except for $v_{i}$, and $v_{i}$ is adjacent to all of them. Therefore,
    \begin{eqnarray*}
        \#(v_{i},u_{j})=n-3.
    \end{eqnarray*}
    
    Finally, we consider any pair of neighboring vertices $u_{i},u_{j}$. We know that $u_{j}$ has $n-3$ neighbors, except for $u_{i}$. $u_{i}$ is not adjacent to one of the neighbors of $u_{j}$, but is adjacent to all others. Therefore,
    \begin{eqnarray*}
        \#(u_{i},u_{j})=(n-3)-1=n-4.
    \end{eqnarray*}
\end{proof}

By applying Lemma\ref{マッチングの三角形数}, the curvature is calculated as follows.

\begin{proposition}
    We consider a graph $K_{n}-\lbrace me\rbrace$$(1\leq m\leq\lfloor n/2\rfloor)$. 
    Then for any pair of neighboring two vertices $x,y$ on $G$, we have
    \begin{eqnarray*}
        \kappa(x,y)=\cfrac{\#(x,y)}{d_{x}\vee d_{y}}=
  \begin{cases}
    \cfrac{n-2}{n-1}, & \text{if $x=v_{i},y=v_{j}$}, \\
    \cfrac{n-3}{n-1},       & \text{if $x=v_{i}$, $y=u_{j}$}, \\
    \cfrac{n-4}{n-2} ,      & \text{if $x=u_{i}$, $y=u_{j}$}.
  \end{cases}
    \end{eqnarray*}
\end{proposition}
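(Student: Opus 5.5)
The upper bound is immediate from Theorem \ref{リッチ曲率の上限}: $\kappa(x,y)\le \#(x,y)/(d_x\vee d_y)$. Combined with Lemma \ref{マッチングの三角形数}, this gives the three displayed values as upper bounds. For instance, in the case $(v_i,u_j)$ we have $d_x\vee d_y=n-1$ and $\#=n-3$. So the whole work is the reverse inequality, i.e. an upper bound on $W(m_x,m_y)$. Since $d(x,y)=1$, we need
\[
W(m_x,m_y)\le 1-\frac{\#(x,y)}{d_x\vee d_y}.
\]
We do this by exhibiting an explicit coupling $A$ in each of the three cases. Note that the graph has diameter at most $2$, and every pair of distinct vertices other than a removed matching pair is at distance $1$.

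\textbf{Case $(v_i,v_j)$.} Both measures are uniform with mass $1/(n-1)$. The supports are $V\setminus\{v_i\}$ and $V\setminus\{v_j\}$. Leave every common neighbour in place at cost $0$, and move the mass $1/(n-1)$ at $v_j$ to $v_i$ at cost $1$. The total cost is $1/(n-1)=1-(n-2)/(n-1)$.

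\textbf{Case $(v_i,u_j)$.} Let $u_j'$ be the matching partner of $u_j$. Then $m_{v_i}$ puts $1/(n-1)$ on each vertex of $V\setminus\{v_i\}$, and $m_{u_j}$ puts $1/(n-2)$ on each vertex of $V\setminus\{u_j,u_j'\}$.
\begin{itemize}
\item On each of the $n-3$ common neighbours $w\neq u_j,u_j'$, keep $1/(n-1)$ in place.
\item Send the mass $1/(n-1)$ at $u_j$ to $v_i$ (distance $1$).
\item Send the mass $1/(n-1)$ at $u_j'$, split evenly, to the $n-2$ targets $V\setminus\{u_j,u_j'\}$. These are all adjacent to $u_j'$ except $u_j$ itself, which is excluded, so the cost is $1/(n-1)$.
\end{itemize}
The deficits check out: each common neighbour needs $1/(n-2)-1/(n-1)=1/((n-1)(n-2))$ and receives exactly this from $u_j'$. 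Vertex $v_i$ needs $1/(n-2)$ and receives $1/(n-1)$ from $u_j$ plus $1/((n-1)(n-2))$ from $u_j'$, which sums to $1/(n-2)$. The total cost is $2/(n-1)=1-(n-3)/(n-1)$, as required.

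\textbf{Case $(u_i,u_j)$.} Here both degrees are $n-2$, and the partners $u_i',u_j'$ are distinct from each other and from $u_i,u_j$. The support of $m_{u_i}$ is $V\setminus\{u_i,u_i'\}$, and that of $m_{u_j}$ is $V\setminus\{u_j,u_j'\}$. The $n-4$ common vertices keep their mass. The unmatched mass is $u_j$ and $u_j'$ on the $x$-side, which must go to $u_i$ and $u_i'$ on the $y$-side. We need both moves to cost $1$:
\begin{itemize}
\item $u_j\to u_i$ has distance $1$.
\item $u_j'\to u_i'$ also has distance $1$, since $u_j'u_i'$ is not a matching edge.
\end{itemize}
So the cost is $2/(n-2)=1-(n-4)/(n-2)$.

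The main obstacle is only bookkeeping: checking the marginal constraints of the coupling, especially in the unequal-degree case $(v_i,u_j)$. There, avoiding any distance-$2$ transport requires the spreading trick from $u_j'$ described above. Small $n$ (e.g.\ $n\le 3$, or $n=4$ with a perfect matching) should be checked separately to ensure the claimed neighbours exist.
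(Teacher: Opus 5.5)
Your proof is correct, but you get the lower bound on $\kappa$ by a different route than the paper. The paper builds no coupling for this proposition. It sandwiches $\kappa(x,y)$ between the Jost--Liu upper bound and the Jost--Liu lower bound, then checks that both $(\cdot)_+$ terms in the lower bound vanish:
\begin{itemize}
\item for $(v_i,v_j)$ both brackets are $-\frac{1}{n-1}$;
\item for $(v_i,u_j)$ they are $-\frac{1}{n-1}$ and $\frac{1}{n-1}-\frac{1}{n-2}$;
\item for $(u_i,u_j)$ both are exactly $0$.
\end{itemize}
So the two bounds coincide. You instead bound $W(m_x,m_y)$ directly with explicit transport plans. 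Your marginals and distances check out in all three cases, including the spreading of the mass at $u_j'$ over $V\setminus\{u_j,u_j'\}$ and the adjacency of $u_j'$ and $u_i'$. The costs are $\frac{1}{n-1}$, $\frac{2}{n-1}$ and $\frac{2}{n-2}$, as required.

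The paper's route is shorter and needs no bookkeeping, but it works only because the cited lower bound happens to be sharp for these degree and triangle counts. Yours is self-contained on that side and exhibits an optimal plan. It is also the technique the paper is later forced to use where the Jost--Liu lower bound is no longer tight: the pair $(v_i,u_0)$ in $K_n-[me]$, and the cycle case.

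The small cases you flag need no separate treatment. For $n=3$ (the path $P_3$) and for $n=4$, $m=2$ (the cycle $C_4$), your couplings apply verbatim with an empty set of common neighbours and give $\kappa=0$, as the formula predicts. The case $n=2$ is excluded by connectedness.
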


\begin{proof}
    First, we consider any pair of neighboring vertices $v_{i},v_{j}$.By assumption and Lemma\ref{マッチングの三角形数}, we know
    \begin{eqnarray*}
        d_{v_{i}}\vee d_{v_{j}}=d_{v_{i}}\wedge d_{v_{j}}=n-1,\ \#(v_{i},v_{j})=n-2.
    \end{eqnarray*}
    Therefore, by Theorem \ref{リッチ曲率の下限} and Theorem \ref{リッチ曲率の上限},
    \begin{eqnarray*}
        \cfrac{n-2}{n-1}\geq
        \kappa(x,y)\geq -2\left( 1- \cfrac{1}{n-1} - \cfrac{1}{n-1} - \cfrac{n-2}{n-1} \right)_++ \cfrac{n-2}{n-1}=\cfrac{n-2}{n-1},
    \end{eqnarray*}
    which implies
    \begin{eqnarray*}
    \kappa(v_{i},v_{j})=\cfrac{n-2}{n-1}=\cfrac{\#(v_{i},v_{j})}{d_{v_{i}}\vee d_{v_{j}}}.
    \end{eqnarray*}

    Second, we consider any pair of neighboring vertices $v_{i},u_{j}$.By assumption and Lemma\ref{マッチングの三角形数}, we know
    \begin{eqnarray*}
        d_{v_{i}}\vee d_{u_{j}}=n-1,\ d_{v_{i}}\wedge d_{u_{j}}=n-2,\ \#(v_{i},u_{j})=n-3.
    \end{eqnarray*}
    Therefore, by Theorem \ref{リッチ曲率の下限} and Theorem \ref{リッチ曲率の上限},
    \begin{eqnarray*}
        \cfrac{n-3}{n-1}\geq\kappa(v_{i},u_{j})&\geq& -\left( 1- \cfrac{1}{n-1} - \cfrac{1}{n-2} - \cfrac{n-3}{n-2} \right)_+\\ &\ & - \left( 1- \cfrac{1}{n-1} - \cfrac{1}{n-2} - \cfrac{n-3}{n-1} \right)_+ + \cfrac{n-3}{n-1}=\cfrac{n-3}{n-1},
    \end{eqnarray*}
    which implies
    \begin{eqnarray*}
    \kappa(v_{i},u_{j})=\cfrac{n-3}{n-1}=\cfrac{\#(v_{i},u_{j})}{d_{v_{i}}\vee d_{u_{j}}}.
    \end{eqnarray*}

    Finally, we consider any pair of neighboring vertices $u_{i},u_{j}$.By assumption and Lemma\ref{マッチングの三角形数}, we know
    \begin{eqnarray*}
        d_{u_{i}}\vee d_{u_{j}}=d_{u_{i}}\wedge d_{u_{j}}=n-2, \#(u_{i},u_{j})=n-4.
    \end{eqnarray*}
    Therefore, by Theorem \ref{リッチ曲率の下限} and Theorem \ref{リッチ曲率の上限},
    \begin{eqnarray*}
        \cfrac{n-4}{n-2}\geq\kappa(u_{i},u_{j})\geq -2\left( 1- \cfrac{1}{n-2} - \cfrac{1}{n-2} - \cfrac{n-4}{n-2} \right)_+ + \cfrac{n-4}{n-2}
        = \cfrac{n-4}{n-2},
    \end{eqnarray*}
    which implies
    \begin{eqnarray*}
    \kappa(u_{i},u_{j})=\cfrac{n-4}{n-2}=\cfrac{\#(u_{i},u_{j})}{d_{u_{i}}\vee d_{u_{j}}}.
    \end{eqnarray*}
\end{proof}

\begin{corollary}
          We consider a graph $K_{2m}-M$, where $M$ is a perfect matching in $K_{2m}$.
    Then for any neighboring $x,y$, we have
    \begin{eqnarray*}
    \kappa(x,y)=\dfrac{\#(x,y)}{d_{x}\vee d_{y}}= \dfrac{2m-4}{2m-2}.
    \end{eqnarray*} 
\end{corollary}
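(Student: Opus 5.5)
This is the special case $n=2m$ with $m=\lfloor n/2\rfloor$ of the preceding Proposition for $K_n-\{me\}$, so the plan is to specialize rather than argue from scratch. Removing a perfect matching $M$ of $K_{2m}$ is removing an $m$-matching with $m=\lfloor 2m/2\rfloor$. In the notation of the previous subsection there are then no vertices of degree $n-1$: the index set $v_i$ $(i=1,\ldots,n-2m)$ is empty. Every vertex is some $u_i$, of degree $2m-2$.

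Consequently every pair of neighboring vertices $x,y$ is of the type $u_i,u_j$. The first two cases of the Proposition, and of Lemma \ref{マッチングの三角形数}, are vacuous here. By Lemma \ref{マッチングの三角形数} we have $\#(x,y)=n-4=2m-4$, and $d_x\vee d_y=d_x\wedge d_y=2m-2$. The third case of the Proposition then gives
\begin{eqnarray*}
\kappa(x,y)=\cfrac{n-4}{n-2}=\cfrac{2m-4}{2m-2}=\cfrac{\#(x,y)}{d_x\vee d_y}.
\end{eqnarray*}

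For self-containedness, one can also redo the squeeze directly. Theorem \ref{リッチ曲率の上限} gives the upper bound $\frac{2m-4}{2m-2}$. In Theorem \ref{リッチ曲率の下限} both positive-part terms equal
\[
\left(1-\frac{2}{2m-2}-\frac{2m-4}{2m-2}\right)_+=0,
\]
so the lower bound is also $\frac{2m-4}{2m-2}$.

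The only point requiring care is the small-$m$ boundary. For $m=1$ the graph $K_2-M$ has no edges, so the statement is vacuous. (Connectedness fails there, but there are no neighboring pairs to consider.) For $m=2$ the graph is the 4-cycle, with curvature $0=\frac{0}{2}$, consistent with the formula. For $m\ge 2$ the degrees are positive, so the bounds of Jost--Liu apply. I expect no real obstacle beyond noting that the $v_i$ cases are empty.
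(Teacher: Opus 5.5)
Your proposal is correct and follows the paper's own route: the corollary is the case $n=2m$ (so $m=\lfloor n/2\rfloor$) of the matching Proposition. There every vertex is of type $u_i$, and the third case gives $\kappa=\frac{2m-4}{2m-2}$ by squeezing between the Jost--Liu bounds, whose positive-part terms vanish. Your remarks on the degenerate cases $m=1$ (no edges) and $m=2$ (the $4$-cycle, curvature $0$) are a harmless addition that the paper leaves implicit.
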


\subsection{Case where edges are intensively removed}
In this subsection, we consider the graph $K_{n}-[me]$ obtained by systematically removing edges from a single vertex of a complete graph.
First, we relabeled the vertices according to their degree (see Figure \ref{１点集中の例}). Let $u_{0}$ denote the vertex has degree $n-m-1$, $u_{i}\ (i=1,\ldots,m)$ denote any vertex has degree $n-2$,
    and $v_{i}\ (i=1,\ldots,n-m-1)$ denote any vertex with degree $n-1$ on $K_{n}-[me]$.
\begin{figure}[H]
\centering
\includegraphics[height=6cm]{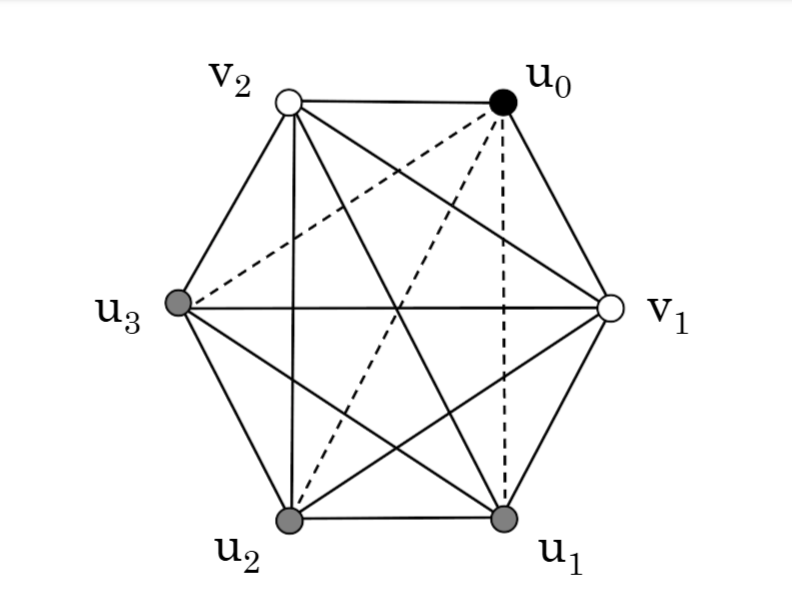}
\caption{$K_{6}-[3e]$}
\label{１点集中の例}
\end{figure}

We provide the following lemma concerning the number of triangles in the case of a graph excluding edges from a single vertex to calculate the Ollivier–Ricci curvature.

\begin{lemma}
\label{1点の辺除去の三角形数}
    We consider a graph $K_{n}-[me]$, where $n\geq4$ and $1\leq m\leq n-3$. 
    Then for any pair of neighboring two vertices, we have
    \begin{eqnarray*}
        \#(v_{i},v_{j})=n-2,\\
        \#(v_{i},u_{j})=n-3,\\
        \#(u_{i},u_{j})=n-3,\\
        \#(v_{i},u_{0})=n-m-2.
    \end{eqnarray*}
\end{lemma}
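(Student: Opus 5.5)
The plan is to count common neighbours directly, exactly as in the proof of Lemma \ref{マッチングの三角形数}. For an edge $(x,y)$, $\#(x,y)$ equals the number of vertices $w\neq x,y$ adjacent to both $x$ and $y$. So in each case we start from the neighbourhood of one endpoint, remove the other endpoint, and subtract the vertices not adjacent to the first endpoint.

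First we record the structure of $K_n-[me]$. The only missing edges are $(u_0,u_i)$ for $i=1,\ldots,m$. Hence:
\begin{itemize}
\item $u_0$ is adjacent exactly to the $v_j$'s;
\item each $u_i$ ($i\ge1$) is adjacent to every vertex except $u_0$;
\item each $v_i$ is adjacent to every vertex.
\end{itemize}
The hypothesis $m\le n-3$ guarantees $n-m-1\ge 2$, so there are at least two $v$'s. This keeps the graph connected and makes the pairs $(v_i,v_j)$ exist.

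We then handle the four cases in turn.
\begin{itemize}
\item For $(v_i,v_j)$: $v_j$ has $n-2$ neighbours other than $v_i$, and $v_i$ is adjacent to all of them. This gives $n-2$.
\item For $(v_i,u_j)$: $u_j$ has $n-3$ neighbours other than $v_i$ (all vertices except $u_j,u_0,v_i$), and $v_i$ is adjacent to all of them. This gives $n-3$.
\item For $(u_i,u_j)$ with $i,j\ge1$: the common neighbours are all vertices except $u_i,u_j,u_0$, since $u_0$ is the only non-neighbour of either. This gives $n-3$.
\item For $(v_i,u_0)$: $u_0$ has $n-m-2$ neighbours other than $v_i$, namely the remaining $v$'s, and $v_i$ is adjacent to all of them. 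This gives $n-m-2$.
\end{itemize}

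There is no real obstacle here. The only point needing care is the $(u_i,u_j)$ case: unlike the matching case, $u_i$ and $u_j$ share the \emph{same} missing neighbour $u_0$. So only one vertex is subtracted, giving $n-3$ rather than $n-4$.
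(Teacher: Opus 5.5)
Your proposal is correct and follows the paper's proof: both count common neighbours case by case, using the fact that the only missing edges are those incident to $u_0$. Your count for $(u_i,u_j)$, namely all vertices except $u_i,u_j,u_0$, is the same quantity the paper writes as $(n-m-1)+(m-2)=n-3$.
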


\begin{proof}
    First, we consider any pair of neighboring vertices $v_{i},v_{j}$. We know that $v_{j}$ has $n-2$ neighbors, except for $v_{i}$, and $v_{i}$ is adjacent to all of them. Therefore,
    \begin{eqnarray*}
        \#(v_{i},v_{j})=n-2.
    \end{eqnarray*}
    
    Second, we consider any pair of neighboring vertices $v_{i},u_{j}$. We know that $u_{j}$ has $n-3$ neighbors, except for $v_{i}$, and $v_{i}$ is adjacent to all of them. Therefore,
    \begin{eqnarray*}
        \#(v_{i},u_{j})=n-3.
    \end{eqnarray*}
    
    Third, we consider any pair of neighboring vertices $u_{i},u_{j}$. We know that vertex $u_{i}$ connects $\lbrace u_{1},\ldots,u_{m}\rbrace\setminus\lbrace u_{i},u_{j}\rbrace$ and $\lbrace v_{1},\ldots,v_{n-m-1}\rbrace$. Similarly, $u_{j}$ connects $\lbrace u_{1},\ldots,u_{m}\rbrace\setminus\lbrace u_{i},u_{j}\rbrace$ and $\lbrace v_{1},\ldots,v_{n-m-1}\rbrace$. Therefore,
    \begin{eqnarray*}
        \#(u_{i},u_{j})=(n-m-1)+(m-2)=n-3.
    \end{eqnarray*}
    
    Finally, we consider any pair of neighboring vertices $v_{i},u_{0}$. We know that $u_{0}$ has $n-m-2$ neighboring except for $v_{i}$, and $v_{i}$ is adjacent to all of them except myself. Therefore, 
    \begin{eqnarray*}
        \#(v_{i},u_{0})=n-m-2.
    \end{eqnarray*}
\end{proof}

By applying Lemma\ref{1点の辺除去の三角形数}, the curvature is calculated as:

\begin{proposition}
    We consider a graph $K_{n}-[me]$, where $n\geq4$ and $1\leq m\leq n-3$.
    Then for any pair of neighboring vertices $x,y$, we have
    \begin{eqnarray*}
        \kappa(x,y)=\cfrac{\#(x,y)}{d_{x}\vee d_{y}}=
        \begin{cases}
    \cfrac{n-2}{n-1}, & \text{if $x=v_{i},y=v_{j}$}, \\
    \cfrac{n-3}{n-1},       & \text{if $x=v_{i}$, $y=u_{j}$}, \\
    \cfrac{n-3}{n-2} ,      & \text{if $x=u_{i}$, $y=u_{j}$},\\
    \cfrac{n-m-2}{n-1} , &\text{if $x=v_{i},y=u_{0}$}.
  \end{cases}
    \end{eqnarray*}
\end{proposition}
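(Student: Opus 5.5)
The plan is to sandwich $\kappa(x,y)$ between the lower bound of Theorem \ref{リッチ曲率の下限} and the upper bound $\#(x,y)/(d_x\vee d_y)$ of Theorem \ref{リッチ曲率の上限}, inserting the triangle counts from Lemma \ref{1点の辺除去の三角形数}. The degrees needed are $d_{v_i}=n-1$, $d_{u_j}=n-2$ for $j\geq 1$, and $d_{u_0}=n-m-1$. For each of the four edge types, I will check that both positive-part terms in Theorem \ref{リッチ曲率の下限} vanish. When they do, the lower bound collapses to $\#(x,y)/(d_x\vee d_y)$ and equality follows, exactly as in the matching case.

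For the first three cases the checks are routine.
\begin{itemize}
\item For $(v_i,v_j)$, both bracketed quantities equal $1-\frac{2}{n-1}-\frac{n-2}{n-1}=-\frac{1}{n-1}<0$.
\item For $(v_i,u_j)$, the $\wedge$-term is $1-\frac{1}{n-1}-\frac{1}{n-2}-\frac{n-3}{n-2}=-\frac{1}{n-1}$. The $\vee$-term is $\frac{1}{n-1}-\frac{1}{n-2}<0$.
\item For $(u_i,u_j)$, both brackets equal $1-\frac{2}{n-2}-\frac{n-3}{n-2}=-\frac{1}{n-2}<0$.
\end{itemize}
This gives the values $\frac{n-2}{n-1}$, $\frac{n-3}{n-1}$ and $\frac{n-3}{n-2}$.

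The main obstacle is the edge $(v_i,u_0)$. There the $\vee$-term equals $\frac{m}{n-1}-\frac{1}{n-m-1}$, which is positive for many values of $m$, so the Jost--Liu lower bound is not sharp. I will still use Theorem \ref{リッチ曲率の上限} for $\kappa\leq\frac{n-m-2}{n-1}$. For the reverse inequality I will bound $W(m_{v_i},m_{u_0})\leq\frac{m+1}{n-1}$ by an explicit coupling.

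The coupling works as follows.
\begin{itemize}
\item The measure $m_{v_i}$ puts $\frac{1}{n-1}$ on every vertex other than $v_i$. The measure $m_{u_0}$ puts $\frac{1}{n-m-1}$ on each $v_j$, including $v_i$.
\item On each of the $n-m-2$ common neighbours $v_j$ ($j\neq i$), I leave mass $\frac{1}{n-1}$ in place at zero cost.
\item The remaining mass of $m_{v_i}$ sits on $u_0,u_1,\ldots,u_m$, and totals $\frac{m+1}{n-1}$.
\item The remaining demand of $m_{u_0}$ is at $v_i$ (amount $\frac{1}{n-m-1}$) and at each $v_j$ with $j\neq i$ (amount $\frac{1}{n-m-1}-\frac{1}{n-1}$). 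These demands total $\frac{m+1}{n-1}$, so supply and demand balance.
\item I ship the supply to the demand arbitrarily. Every $u_k$ with $k\geq 0$ is adjacent to every $v_j$, so each unit moves distance exactly $1$.
\end{itemize}
Hence $W\leq\frac{m+1}{n-1}$, which gives $\kappa(v_i,u_0)\geq 1-\frac{m+1}{n-1}=\frac{n-m-2}{n-1}$. Combined with the upper bound, this yields equality.
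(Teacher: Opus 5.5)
Your proposal is correct and matches the paper's proof. The first three edge types are handled by sandwiching between the Jost--Liu bounds. For $(v_i,u_0)$, your plan moves the mass $\frac{m+1}{n-1}$ on $u_0,\dots,u_m$ to the $v_j$'s at distance $1$, which is the same transport the paper writes out explicitly. The only difference is that you apply the coupling for every $m$, while the paper first handles $n\in\{4,5\}$ and $m=1$ (where the Jost--Liu lower bound is already sharp) separately; this case split is unnecessary, and your uniform argument is valid and slightly cleaner.
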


\begin{proof}
    First, we consider any pair of neighboring vertices $v_{i},v_{j}$. 
    By assumption and Lemma\ref{1点の辺除去の三角形数}, we know 
    \begin{eqnarray*}
        d_{v_{i}}\vee d_{v_{j}}=d_{v_{i}}\wedge d_{v_{j}}=n-1, \#(v_{i},v_{j})=n-2.
    \end{eqnarray*}
    So, by Theorem \ref{リッチ曲率の下限} and Theorem \ref{リッチ曲率の上限},
    \begin{eqnarray*}
        \cfrac{n-2}{n-1}\geq
         \kappa(v_{i},v_{j})\geq -2\left( 1- \cfrac{1}{n-1} - \cfrac{1}{n-1} - \cfrac{n-2}{n-1} \right)_++ \cfrac{n-2}{n-1}= \cfrac{n-2}{n-1}.
    \end{eqnarray*}
    Thus,
    \begin{eqnarray*}
        \kappa(v_{i},v_{j})=\cfrac{n-2}{n-1}=\cfrac{\#(v_{i},v_{j})}{d_{v_{i}}\vee d_{v_{j}}}.
    \end{eqnarray*}

        Second, we consider any pair of neighboring vertices  $v_{i},u_{j}$. By assumption and Lemma\ref{1点の辺除去の三角形数}, we know
    \begin{eqnarray*}
        d_{v_{i}}\vee d_{u_{j}}=n-1,d_{v_{i}}\wedge d_{u_{j}}=n-2,\#(v_{i},u_{j})=n-3.
    \end{eqnarray*}
    So, by Theorem \ref{リッチ曲率の下限} and Theorem \ref{リッチ曲率の上限},
    \begin{eqnarray*}
       \cfrac{n-3}{n-1}\geq
       \kappa(v_{i},u_{j})&\geq& -\left( 1- \cfrac{1}{n-1} - \cfrac{1}{n-2} - \cfrac{n-3}{n-2} \right)_+\\ 
       &\ &- \left( 1- \cfrac{1}{n-1} - \cfrac{1}{n-2} - \cfrac{n-3}{n-1} \right)_+ + \cfrac{n-3}{n-1}= \cfrac{n-3}{n-1}. 
    \end{eqnarray*}
     Thus, 
    \begin{eqnarray*}
        \kappa(v_{i},u_{j})=\cfrac{n-3}{n-1}=\cfrac{\#(v_{i},u_{j})}{d_{v_{i}\vee}d_{u_{j}}}.
    \end{eqnarray*}

        Third, we consider any pair of neighboring vertices  $u_{i},u_{j}$. By assumption and Lemma\ref{1点の辺除去の三角形数}, we know
    \begin{eqnarray*}
        d_{u_{i}}\vee d_{u_{j}}=d_{u_{i}}\wedge d_{u_{j}}=n-2,\#(u_{i},u_{j})=n-3. 
    \end{eqnarray*}
    So, by Theorem \ref{リッチ曲率の下限} and Theorem \ref{リッチ曲率の上限},
    \begin{eqnarray*}
       \cfrac{n-3}{n-2}\geq
       \kappa(u_{i},u_{j})\geq -2\left( 1- \cfrac{1}{n-2} - \cfrac{1}{n-2} - \cfrac{n-3}{n-2} \right)_++ \cfrac{n-3}{n-2}= \cfrac{n-3}{n-2}. 
    \end{eqnarray*}
    Thus, 
    \begin{eqnarray*}
        \kappa(u_{i},u_{j})=\cfrac{n-3}{n-2}=\cfrac{\#(u_{i},u_{j})}{d_{u_{i}\vee}d_{u_{j}}}.
    \end{eqnarray*}

    Finally, we consider any pair of neighboring vertices $v_{i},u_{0}$. By assumption and Lemma\ref{1点の辺除去の三角形数}, we know
    \begin{eqnarray*}
        d_{v_{i}}\vee d_{u_{0}}=n-1,d_{v_{i}}\wedge d_{u_{0}}=n-m-1,\#(v_{i},u_{0})=n-m-2.
    \end{eqnarray*}
    So, by Theorem \ref{リッチ曲率の下限} and Theorem \ref{リッチ曲率の上限},
        \begin{eqnarray*}
        \cfrac{n-m-2}{n-1}\geq\kappa(v_{i},u_{0})\geq-\left(\cfrac{-m^2+(n-1)m-(n-1)}{(n-1)(n-m-1)}\right)_+ +\cfrac{n-m-2}{n-1}.
    \end{eqnarray*}

        We define
    \begin{eqnarray*}
        B=\cfrac{-m^2+(n-1)m-(n-1)}{(n-1)(n-m-1)}.
    \end{eqnarray*}
    Since we know 
    \begin{eqnarray*}
         \cfrac{1}{(n-1)(n-m-1)}>0
    \end{eqnarray*}
    in $1\leq m\leq n-3$, if $B\leq0$, then we have
    \begin{eqnarray*}
        -m^2+(n-1)m-(n-1)\leq0,
    \end{eqnarray*}
    that is,
    \begin{eqnarray*}
        \left(m-\cfrac{n-1}{2}\right)^2 -\cfrac{n^2-6n+5}{4}\geq0.
    \end{eqnarray*}
    By setting $n=4,5$, we know $B\leq0$, thus
    \begin{eqnarray*}
        \kappa(v_{i},u_{0})=\cfrac{n-m-2}{n-1}.
    \end{eqnarray*}
    Moreover, by setting $n\geq6,m=1$, we know $B\leq0$, thus
    \begin{eqnarray*}
        \kappa(v_{i},u_{0})=\cfrac{n-m-2}{n-1}.
    \end{eqnarray*}
    Hence, we consider $\kappa(v_{i},u_{0})$ in the case of $n\geq6,2\leq m\leq n-3$.\par

        We consider a coupling between $m_{v_{i}}$ and $m_{u_{0}}$.  Our transfer plan moving $m_{v_{i}}$ to $m_{u_{0}}$ should be as follows:
     \begin{enumerate}
\renewcommand{\labelenumi}{(\arabic{enumi})}
\item Move the mass of $\cfrac{1}{n-1}$ from $z$ to $z$ for any $z\in\Gamma(v_{i})\cap\Gamma(u_{0})$. The distance is $0$.
\item Move the mass of $\cfrac{1}{(m+1)(n-1)}$ from any $x\in V\setminus\Gamma(u_{0})$ to $v_{i}$. The distance is $1$.
\item Move the mass of $\cfrac{m}{(m+1)(n-1)(n-m-1)}$ from any $x\in V\setminus\Gamma(u_{0})$ to $y\in\Gamma(u_{0})$. The distance is $1$.
\end{enumerate}

    By this transfer plan, we define a map $A:V\times V\to \mathbb{R}$ by
    \begin{eqnarray*}
  A(x,y)=
  \begin{cases}
    \cfrac{1}{n-1}, & \text{if $x=y\in\Gamma(v_{i})\cap\Gamma(u_{0})$}, \\
    \cfrac{1}{(m+1)(n-1)},                 & \text{if $x\in V\setminus\Gamma(u_{0})$, y=$v_{i}$}, \\
    \cfrac{m}{(m+1)(n-1)(n-m-1)} ,      & \text{if $x\in V\setminus\Gamma(u_{0})$, $y\in\Gamma(u_{0})$},\\
    0,    & \text{otherwise}.
  \end{cases}
\end{eqnarray*}

    By definition of $W(m_{v_{i}},m_{u_{0}})$, we get
    \begin{eqnarray*}
        W(m_{v_{i}},m_{u_{0}})&\leq&\cfrac{1}{(m+1)(n-1)}\times(m+1)\\
        &\ &+\cfrac{m}{(m+1)(n-1)(n-m-1)}\times(m+1)\times(n-m-1)=\cfrac{m+1}{n-1},        
    \end{eqnarray*}
    which implies
    \begin{eqnarray*}
        \kappa(v_{i},u_{0})\geq1-\cfrac{m+1}{n-1}=\cfrac{n-m-2}{n-1}.
    \end{eqnarray*}   

 Thus, we have
    \begin{eqnarray*}
        \kappa(v_{i},u_{0})=\cfrac{n-m-2}{n-1}=\cfrac{\#(v_{i},u_{0})}{d_{v_{i}}\vee d_{u_{0}}}.
    \end{eqnarray*}
    This completes the proof.
    
\end{proof}

\begin{corollary}
    We consider a graph $K_{n}-[(n-2)e]$ $(n\geq4)$. Then for any pair of neighboring vertices $x,y$, we have
    \begin{eqnarray*}
        \kappa(x,y)=\cfrac{\#(x,y)}{d_{x}\vee d_{y}}=
        \begin{cases}
    \cfrac{n-3}{n-1},       & \text{if $x=v_{1}$, $y=u_{i}$}, \\
    \cfrac{n-3}{n-2} ,      & \text{if $x=u_{i}$, $y=u_{j}$},\\
    0 , &\text{if $x=v_{1},y=u_{0}$}.
  \end{cases}
    \end{eqnarray*}
\end{corollary}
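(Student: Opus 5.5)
The corollary is the boundary case $m=n-2$ of the preceding proposition, which itself was only proved for $1\leq m\leq n-3$. So I will not simply quote it. Instead I will recheck each case directly, using the same tools: Theorem \ref{リッチ曲率の下限}, Theorem \ref{リッチ曲率の上限}, and, where the lower bound is not sharp, an explicit coupling.

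First I will record the structure of $K_{n}-[(n-2)e]$. The vertex $u_{0}$ has degree $n-m-1=1$, and its only neighbour is $v_{1}$. There is exactly one vertex of degree $n-1$, namely $v_{1}$, so the case $x=v_{i},y=v_{j}$ does not occur. The vertices $u_{1},\ldots,u_{n-2}$ have degree $n-2$.

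Next I will redo the triangle counts of Lemma \ref{1点の辺除去の三角形数}, whose proof does not actually use $m\leq n-3$ in these three cases:
\begin{itemize}
\item $\#(v_{1},u_{i})=n-3$, since every neighbour of $u_{i}$ other than $v_{1}$ is adjacent to $v_{1}$;
\item $\#(u_{i},u_{j})=(n-m-1)+(m-2)=1+(n-4)=n-3$, the common neighbours being $v_{1}$ and the other $u_{k}$;
\item $\#(v_{1},u_{0})=0$, since $u_{0}$ is a leaf.
\end{itemize}

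For the pairs $(v_{1},u_{i})$ and $(u_{i},u_{j})$ the degrees and triangle counts are the same as in the proposition. So the same sandwich between Theorem \ref{リッチ曲率の下限} and Theorem \ref{リッチ曲率の上限} applies verbatim: the positive-part terms vanish, and this gives $\frac{n-3}{n-1}$ and $\frac{n-3}{n-2}$ respectively. Those computations depend only on $n$, not on $m$, so nothing needs to change.

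The only genuinely new case is the edge $(v_{1},u_{0})$.
\begin{itemize}
\item The upper bound of Theorem \ref{リッチ曲率の上限} gives $\kappa(v_{1},u_{0})\leq 0/(n-1)=0$.
\item For the lower bound I will compute the Wasserstein distance directly rather than use Theorem \ref{リッチ曲率の下限}, which would likely give a negative value here. Since $u_{0}$ is a leaf, $m_{u_{0}}=\delta_{v_{1}}$. The measure $m_{v_{1}}$ is uniform on the $n-1$ vertices other than $v_{1}$, each at distance $1$ from $v_{1}$. The only coupling transports everything to $v_{1}$, at total cost $1$, so $W(m_{v_{1}},m_{u_{0}})=1$.
\item Hence $\kappa(v_{1},u_{0})=1-1=0=\#(v_{1},u_{0})/(d_{v_{1}}\vee d_{u_{0}})$.
\end{itemize}

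The main obstacle is only noticing that the proposition's hypothesis $m\leq n-3$ excludes this case, so the leaf edge needs this direct transport argument.
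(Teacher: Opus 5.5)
Your proof is correct, and for the edges $(v_1,u_i)$ and $(u_i,u_j)$ it is exactly the paper's argument: the Jost--Liu lower and upper bounds coincide, and those computations depend only on $n$. You are right that the lemma and proposition are stated only for $1\le m\le n-3$. The paper gives no separate proof of the corollary and tacitly specializes to $m=n-2$, so re-checking the cases directly is the right move.

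The one place you deviate is the leaf edge $(v_1,u_0)$. There you use that $m_{u_0}=\delta_{v_1}$ forces the coupling, which gives $W(m_{v_1},m_{u_0})=1$ and hence $\kappa(v_1,u_0)=0$ exactly, without needing the upper bound. That is clean and self-contained.

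Your aside that the Jost--Liu lower bound ``would likely give a negative value here'' is mistaken, though. With $d_{v_1}=n-1$, $d_{u_0}=1$ and $\#(v_1,u_0)=0$, both expressions inside the positive parts equal $1-\frac{1}{n-1}-1=-\frac{1}{n-1}<0$, so the lower bound is exactly $0$. Equivalently, the paper's quantity $B$ evaluates at $m=n-2$ to $-\frac{1}{n-1}\le 0$, so the paper's sandwich argument for $(v_i,u_0)$ goes through unchanged in this boundary case.

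Both routes work. Yours exploits the Dirac mass to avoid the general bound, while the paper's keeps one uniform method for all three edge types.
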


\subsection{Case where edges of a cycle are removed}
In this subsection, we consider the graph $K_{n}-C_{m}$ obtained by removing the edges of a cycle from a complete graph.
First, we relabeled the vertices according to their degree (see Figure \ref{サイクルの例}). Let $v_{i}\ (i=1,\ldots,n-m)$ denote the vertex of degree $n-1$, $u_{i}\ (i=1,\ldots,m)$ denote the vertex of degree $n-3$ on $K_{n}-C_{m}$. 

\begin{figure}[H]
\centering
\includegraphics[height=6cm]{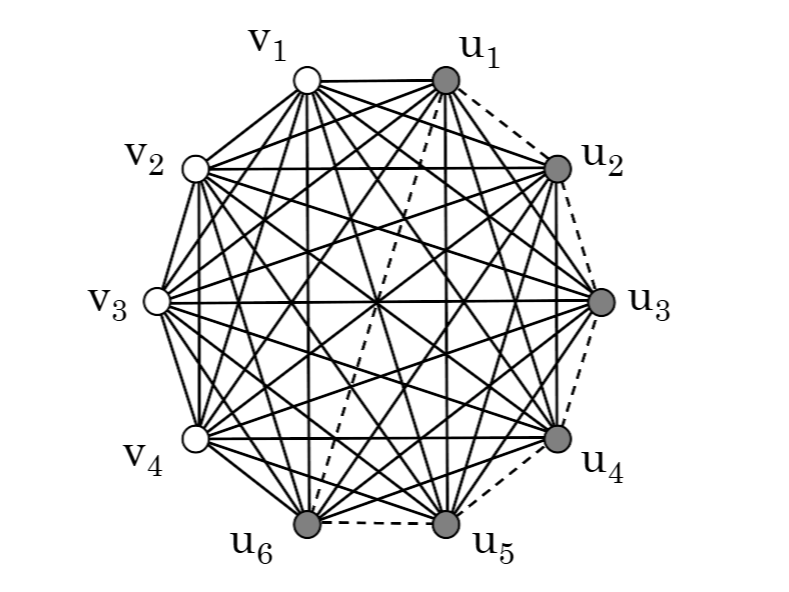}
\caption{$K_{10}-C_{6}$}
\label{サイクルの例}
\end{figure}

We provide the following lemma concerning the number of triangles in the case of a graph excluding cycle edges to calculate Ollivier Ricci curvature.

\begin{lemma}
\label{サイクルの三角形数}
    We consider a graph $K_{n}-C_{m}$, where $3\leq m\leq n-1$.
    Moreover, for some pair of neighboring vertices of degree $n-3$, denote $u_{i},u_{j}$ if $m\geq4$ and there is one or two vertices which are not adjacent to $u_{i}$ and $u_{j}$, and denote $u'_{i},u'_{j}$ if $m\geq6$ and there is no vertex which is not adjacent to $u'_{i}$ and $u'_{j}$.
    Then for any pair of neighboring two vertices, we have
    \begin{eqnarray*}
        \#(v_{i},v_{j})&=&n-2,\\
        \#(v_{i},u_{j})&=&n-4,\\
        \#(u_{i},u_{j})&=&
        \begin{cases}
    n-4, & \text{if $m=4$}, \\
    n-5,  & \text{if $m\geq5$ },
    \end{cases}\\
        \#(u'_{i},u'_{j})&=&n-6.
    \end{eqnarray*}
\end{lemma}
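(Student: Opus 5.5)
Following the earlier triangle-counting lemmas, I will count common neighbours directly. For adjacent $x,y$, $\#(x,y)=|\Gamma(x)\cap\Gamma(y)|$. Label the removed cycle as $u_1u_2\cdots u_mu_1$, so each $u_k$ misses only itself and its two cycle-neighbours $u_{k\pm1}$ (indices mod $m$). Hence $\Gamma(u_k)=V\setminus\{u_{k-1},u_k,u_{k+1}\}$, and every $v_i$ is adjacent to all other vertices. The count is then $n-2$ minus the number of vertices of $V\setminus\{x,y\}$ that are non-adjacent to at least one of $x,y$.

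For $(v_i,v_j)$ no vertex is excluded, so the count is $n-2$. For $(v_i,u_j)$ the common neighbours are $\Gamma(u_j)\setminus\{v_i\}$. Since $|\Gamma(u_j)|=n-3$ and $v_i\in\Gamma(u_j)$, this gives $n-4$.

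For two adjacent cycle vertices $u_a,u_b$, adjacency in $K_n-C_m$ means $b\neq a\pm1$, which requires $m\geq4$. The excluded set, besides $u_a,u_b$, is $N=\{u_{a-1},u_{a+1},u_{b-1},u_{b+1}\}$, and the answer is $\#(u_a,u_b)=n-2-|N|$. Everything therefore reduces to computing $|N|$ by cyclic distance:
\begin{itemize}
\item If $u_a,u_b$ are at cyclic distance $2$ (say $b=a+2$), then $u_{a+1}=u_{b-1}$. When $m=4$ we also have $u_{a-1}=u_{b+1}$, so $|N|=2$ and the count is $n-4$. When $m\geq5$, $|N|=3$ and the count is $n-5$.
\item If the cyclic distance is at least $3$, possible only when $m\geq6$, the four vertices of $N$ are distinct, so $|N|=4$ and the count is $n-6$.
\end{itemize}

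The one delicate point is matching the statement's wording ("one or two vertices which are not adjacent to $u_i$ and $u_j$", i.e.\ shared non-neighbours) with cyclic distance $2$, and "no such vertex" with distance $\ge3$. I will state this correspondence explicitly. I will also check the boundary $m=5$: there every non-consecutive pair is at distance $2$, so only the $n-5$ case occurs, which is consistent with requiring $m\geq6$ for the primed pairs.
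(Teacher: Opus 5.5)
Your proposal is correct and takes essentially the same route as the paper. Both write $\#(x,y)$ as $n-2$ minus the number of vertices other than $x,y$ that are non-adjacent to $x$ or to $y$, and the cases are split according to whether the non-neighbours of two cycle vertices overlap (two shared when $m=4$, one shared when $m\geq5$, none otherwise). Your explicit labeling $\Gamma(u_k)=V\setminus\{u_{k-1},u_k,u_{k+1}\}$ and the split by cyclic distance make precise what the paper states only informally.
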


    \begin{proof}
         First, we consider any pair of neighboring vertices $v_{i},v_{j}$. We know that vertex $v_{i}$ has $n-2$ neighbors, except for $v_{j}$, and $v_{j}$ is adjacent to all of them. Therefore,
    \begin{eqnarray*}
        \#(v_{i},v_{j})=n-2.
    \end{eqnarray*}
    
    Second, we consider any pair of neighboring vertices $v_{i},u_{j}$. We know that $u_{j}$ has $n-4$ neighbors, except for $v_{i}$, and $v_{i}$ is adjacent to all of them. Therefore,
    \begin{eqnarray*}
        \#(v_{i},u_{j})=n-4.
    \end{eqnarray*}

    Third, we consider any pair of neighboring vertices $u_{i},u_{j}$. 
    \begin{itemize}
    \item $m=4$\\
    We know that there are two vertices that are not adjacent to $u_{i}$ and $u_{j}$. Therefore,
    \begin{eqnarray*}
        \#(u_{i},u_{j})=(n-2)-2=n-4.
    \end{eqnarray*}

    \item  $m\geq5$ and there is the vertex which is not adjacent to $u_{i}$ and $u_{j}$\\
    We know that each vertex has two non-adjacent vertices, one of which is the same vertex. Therefore,
    \begin{eqnarray*}
        \#(u_{i},u_{j})=(n-2)-3=n-5.
    \end{eqnarray*}
    \end{itemize}

    Finally, we consider any pair of neighboring vertices  $u'_{i}$ and $u'_{j}$\\
    We know that each vertex has two non-adjacent vertices, and all of them are different. Therefore,
    \begin{eqnarray*}
        \#(u'_{i},u'_{j})=(n-2)-4=n-6.
    \end{eqnarray*}
    
     \end{proof}

By applying Lemma\ref{サイクルの三角形数}, the curvature is calculated as:

\begin{proposition}
    We consider a graph $K_{n}-C_{m}$, where $3\leq m\leq n-1$. Moreover, for some pair of neighboring vertices of degree $n-3$, denote $u_{i},u_{j}$ if $m\geq4$ and there is one or two vertices which are not adjacent to $u_{i}$ and $u_{j}$, and denote $u'_{i},u'_{j}$ if $m\geq6$ and there is no vertex which is not adjacent to $u'_{i}$ and $u'_{j}$.
    Then for any pair of neighboring vertices $x,y$, we have
    \begin{eqnarray*}
        \kappa(x,y)=\cfrac{\#(x,y)}{d_{x}\vee d_{y}}=
        \begin{cases}
    \cfrac{n-2}{n-1}, & \text{if $x=v_{i},y=v_{j}$}, \\
    \cfrac{n-4}{n-1},       & \text{if $x=v_{i}$, $y=u_{j}$}, \\
    \cfrac{n-4}{n-3} ,  &\text{if $x=u_{i}$, $y=u_{j}$, and $m=4$},\\
    \cfrac{n-5}{n-3} ,      & \text{if $x=u_{i}$, $y=u_{j}$ and $m\geq5$},\\
    \cfrac{n-6}{n-3} , &\text{if $x=u'_{i},y=u'_{j}$}.
  \end{cases}
    \end{eqnarray*}
\end{proposition}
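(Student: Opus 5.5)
The plan is to bound $\kappa(x,y)$ from both sides. For every case, the upper bound $\kappa(x,y)\le \#(x,y)/(d_x\vee d_y)$ comes directly from Theorem \ref{リッチ曲率の上限}, with the triangle counts taken from Lemma \ref{サイクルの三角形数} and the degrees $d_{v_i}=n-1$, $d_{u_i}=n-3$. For the lower bound I will first try Theorem \ref{リッチ曲率の下限}, exactly as in the earlier propositions.

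\emph{Cases where Theorem \ref{リッチ曲率の下限} suffices.} For $(v_i,v_j)$ the degrees are both $n-1$ and $\#=n-2$. Each bracket equals $-1/(n-1)<0$, so the lower bound is $(n-2)/(n-1)$. For $(u_i,u_j)$ both degrees are $n-3$. When $m=4$ we have $\#=n-4$, each bracket is $-1/(n-3)$, and the lower bound is $(n-4)/(n-3)$. When $m\ge5$ we have $\#=n-5$, each bracket is exactly $0$, and the lower bound is $(n-5)/(n-3)$. In all three cases the lower bound meets the upper bound.

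\emph{Case $(v_i,u_j)$.} Here the first bracket equals $-1/(n-1)$, but the second equals $(n-5)/((n-1)(n-3))$, which is positive for $n>5$. So Theorem \ref{リッチ曲率の下限} is not sharp, and I will build an explicit coupling as in the $K_n-[me]$ case.

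Let $a,b$ be the cycle neighbours of $u_j$; these are exactly its non-neighbours. Then $m_{v_i}$ puts $1/(n-1)$ on $u_j,a,b$ and on the $n-4$ common neighbours. The measure $m_{u_j}$ puts $1/(n-3)$ on $v_i$ and on the same common neighbours.

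The plan is to keep $1/(n-1)$ fixed at each common neighbour. The surplus $3/(n-1)$ sitting on $u_j,a,b$ must cover two deficits: $1/(n-3)$ at $v_i$, and $2/((n-1)(n-3))$ at each common neighbour. These deficits sum to exactly $3/(n-1)$. Every move used will have length $1$: $v_i$ is adjacent to everything, $u_j$ is adjacent to every common neighbour, and $a$ (resp.\ $b$) is non-adjacent only to its other cycle neighbour. Consequently $W\le 3/(n-1)$, which gives $\kappa\ge (n-4)/(n-1)$.

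The step needing care is showing that such a transport plan exists while avoiding the at most two forbidden pairs $a\to c$ and $b\to d$. I will do this by sending all of $u_j$'s surplus first to the forbidden targets $c,d$ (plus $v_i$ if needed). I will then distribute $a,b$ over the remaining targets, checking the capacities by a short Hall-type count.

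\emph{Case $(u'_i,u'_j)$.} Write $x=u'_i$ and $y=u'_j$. Let $a,b$ be the non-neighbours of $x$ and $c,d$ those of $y$; these four vertices are all distinct. Theorem \ref{リッチ曲率の下限} only gives $(n-8)/(n-3)$ here, so again I construct a coupling.

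Both measures have mass $1/(n-3)$ on the $n-6$ common neighbours, which stays in place. The remaining mass of $m_x$ sits on $\{y,c,d\}$ and that of $m_y$ on $\{x,a,b\}$. I send $y\to x$ (distance $1$) and then match $\{c,d\}$ to $\{a,b\}$ using edges of $K_n-C_m$. Since $c$ and $d$ each have only two cycle neighbours, one of which is $y$, at most one of $a,b$ is forbidden for each of them. I will check that a perfect matching avoiding cycle edges always exists; the tightest case is $m=6$, where $b c$ and $d a$ may be cycle edges, and there $c\to a$, $d\to b$ works. This yields $W\le 3/(n-3)$, i.e.\ $\kappa\ge (n-6)/(n-3)$, matching the upper bound.

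I expect the main obstacle to be verifying feasibility, with all moves of length one, of the $(v_i,u_j)$ transport. There the mass amounts are unequal, so it is a genuine flow problem rather than a simple matching.
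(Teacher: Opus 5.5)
Your proposal is correct and uses the same squeeze as the paper. The upper bound is Jost--Liu. The lower bound is Jost--Liu where it is already sharp, namely for $(v_i,v_j)$ and $(u_i,u_j)$, and otherwise a transport plan all of whose moves have length $1$. What differs is how you produce the couplings.

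For $(v_i,u_j)$, the paper writes down four separate plans ($m=3$; $m=4$; $m=5,n=6$; $m\ge5,n\ge7$). You instead pose a single transportation problem and prove feasibility by a Hall/Gale count. This is uniform in $m$ and $n$, because the only sink sets not adjacent to all three sources $u_j,a,b$ are $\{c\}$ and $\{d\}$. For $m\ge5$ the check is trivial. For $m=4$, where $c=d$ is barred to both $a$ and $b$, it reads $2/((n-1)(n-3))\le 1/(n-1)$, which holds since $n\ge5$.

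Your greedy order in fact reproduces the paper's plans for $m=4$ and for $m\ge5,n\ge7$, with the leftover $(n-5)/((n-1)(n-3))$, resp.\ $(n-7)/((n-1)(n-3))$, of $u_j$ sent to $v_i$. Taken literally, it fails only for $n=6$, $m=5$. There $u_j$ carries $3/15$ while $c$ and $d$ together need $4/15$, so you must also route $a\to d$ and $b\to c$. Your Hall count covers this case, and one valid plan is exactly the paper's $m=5,n=6$ coupling.

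For $(u'_i,u'_j)$, your single coupling ($y\to x$ plus a perfect matching $\{c,d\}\to\{a,b\}$ in $K_n-C_m$) replaces the paper's three subcases: $m=6$, one cycle edge between the two non-neighbour pairs, and none. To justify the matching, it is not enough that each of $c,d$ bars at most one of $a,b$. You should add that each of $a,b$ also has only one cycle neighbour besides $x$. Then the barred pairs form a partial matching of $K_{2,2}$, and its complement always contains a perfect matching.

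Your route is shorter and makes transparent why cost $3/(n-1)$, resp.\ $3/(n-3)$, is attainable in every configuration. The paper's route gives fully explicit masses without appealing to a flow-feasibility theorem.
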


\begin{proof}
    First, we consider any pair of neighboring vertices $v_{i},v_{j}$. 
    By assumption and Lemma\ref{サイクルの三角形数}, we know 
    \begin{eqnarray*}
        d_{v_{i}}\vee d_{v_{j}}=d_{v_{i}}\wedge d_{v_{j}}=n-1, \#(v_{i},v_{j})=n-2.
    \end{eqnarray*}
    So, by Theorem \ref{リッチ曲率の下限} and Theorem \ref{リッチ曲率の上限},
    \begin{eqnarray*}
        \cfrac{n-2}{n-1}\geq
         \kappa(v_{i},v_{j})\geq -2\left( 1- \cfrac{1}{n-1} - \cfrac{1}{n-1} - \cfrac{n-2}{n-1} \right)_++ \cfrac{n-2}{n-1}= \cfrac{n-2}{n-1}.
    \end{eqnarray*}
    Thus,
    \begin{eqnarray*}
        \kappa(v_{i},v_{j})=\cfrac{n-2}{n-1}=\cfrac{\#(v_{i},v_{j})}{d_{v_{i}}\vee d_{v_{j}}}.
    \end{eqnarray*}

    Second, we consider any pair of neighboring vertices  $v_{i},u_{j}$. By assumption and Lemma\ref{サイクルの三角形数}, we know
    \begin{eqnarray*}
        d_{v_{i}}\vee d_{u_{j}}=n-1,d_{v_{i}}\wedge d_{u_{j}}=n-3,\#(v_{i},u_{j})=n-4.
    \end{eqnarray*}
    So, by Theorem \ref{リッチ曲率の下限} and Theorem \ref{リッチ曲率の上限},
    \begin{eqnarray*}
       \cfrac{n-4}{n-1}\geq
       \kappa(v_{i},u_{j})&\geq& -\left( 1- \cfrac{1}{n-1} - \cfrac{1}{n-3} - \cfrac{n-4}{n-3} \right)_+\\
       &\ & - \left( 1- \cfrac{1}{n-1} - \cfrac{1}{n-3} - \cfrac{n-4}{n-1} \right)_+ + \cfrac{n-4}{n-1}\\
       &=& -\left(\cfrac{n-5}{(n-1)(n-3)} \right)_+ +\cfrac{n-4}{n-1}. 
    \end{eqnarray*}
    Hence, we need to consider $\kappa(v_{i},u_{j})$ in the case of $n\geq6$.\par
    
    \begin{itemize}
    \item $m=3$\\
        We consider a coupling between $m_{v_{i}}$ and $m_{u_{j}}$. Our transfer plan for moving $m_{v_{i}}$ to $m_{u_{j}}$ should be as follows:
     \begin{enumerate}
\renewcommand{\labelenumi}{(\arabic{enumi})}
\item Move the mass of $\cfrac{1}{n-1}$ from $z$ to $z$ for any $z\in\Gamma(v_{i})\cap\Gamma(u_{j})$. The distance is $0$.
\item Move the mass of $\cfrac{1}{3(n-1)}$ from any $x\in V\setminus\Gamma(u_{j})$ to $v_{i}$. The distance is $1$.
\item Move the mass of $\cfrac{2}{3(n-1)(n-3)}$ from any $x\in V\setminus\Gamma(u_{j})$ to $y\in\Gamma(u_{j})$. The distance is $1$.
\end{enumerate}
     By this transfer plan, we define a map $A:V\times V\to \mathbb{R}$ by
    \begin{eqnarray*}
  A(x,y)=
  \begin{cases}
    \cfrac{1}{n-1}, & \text{if $x=y\in\Gamma(v_{i})\cap\Gamma(u_{j})$}, \\
    \cfrac{1}{3(n-1)},                 & \text{if $x\in V\setminus\Gamma(u_{j})$, y=$v_{i}$}, \\
    \cfrac{2}{3(n-1)(n-3)} ,      & \text{if $x\in V\setminus\Gamma(u_{j})$, $y\in\Gamma(u_{j})$},\\
    0,    & \text{otherwise}.
  \end{cases}
\end{eqnarray*}
    By the definition of $W(m_{v_{i}},m_{u_{j}})$, we obtain: 
    \begin{eqnarray*}
        W(m_{v_{i}},m_{u_{j}})&\leq&\cfrac{1}{3(n-1)}\times3 +\cfrac{2}{3(n-1)(n-3)}\times3\times(n-3)\\
        &=&\cfrac{3}{n-1},        
    \end{eqnarray*}
    which implies
    \begin{eqnarray*}
        \kappa(v_{i},u_{j})\geq1-\cfrac{3}{n-1}=\cfrac{n-4}{n-1}.
    \end{eqnarray*}    
    Thus, we have
    \begin{eqnarray*}
        \kappa(v_{i},u_{j})=\cfrac{n-4}{n-1}.
    \end{eqnarray*}

    \item $m=4$\\
    Let denote $\hat{u_{j}}$ that the vertex is adjacent to $u_{j}$.  
   We consider a coupling between $m_{v_{i}}$ and $m_{u_{j}}$. Our transfer plan for moving $m_{v_{i}}$ to $m_{u_{j}}$ should be as follows:
\begin{enumerate}
\renewcommand{\labelenumi}{(\arabic{enumi})}
\item Move the mass of $\cfrac{1}{n-1}$ from $z$ to $z$ for any $z\in\Gamma(v_{i})\cap\Gamma(u_{j})$. The distance is $0$.

\item Using the mass at $u_{j}$, we have Move the mass of $\cfrac{2}{(n-1)(n-3)}$ to $\hat{u_{j}}$, and move the mass of $\cfrac{n-5}{(n-1)(n-3)}$ to $v_{i}$ . The distance is $1$.\par
We remark that $ \cfrac{1}{n-1}-\cfrac{2}{(n-1)(n-3)}\geq0$ and $ \cfrac{1}{n-3}-\cfrac{n-5}{(n-1)(n-3)}\geq0.$

\item Using the mass at $x\in V\setminus(\Gamma(u_{j})\cup\lbrace u_{j}\rbrace)$. Move the mass of $\cfrac{1}{(n-1)(n-3)}$ to $y\in\Gamma(u_{j})\setminus\lbrace v_{i},\hat{u_{j}}\rbrace$, and move the mass of $\cfrac{2}{(n-1)(n-3)}$ to $v_{i}$. The distance is $1$.\par
We remark that $ \cfrac{2}{n-1}-\cfrac{2}{(n-1)(n-3)}\times(n-5)\geq0.$

\end{enumerate}
     By this transfer plan, we define a map $A:V\times V\to \mathbb{R}$ by
    \begin{eqnarray*}
  A(x,y)=
  \begin{cases}
    \cfrac{1}{n-1}, & \text{if $x=y\in\Gamma(v_{i})\cap\Gamma(u_{j})$}, \\
    \cfrac{2}{(n-1)(n-3)},                 & \text{if $x=u_{j}$, $y=\hat{u_{j}}$}, \\
    \cfrac{n-5}{(n-1)(n-3)},    &\text{if $x=u_{j}, y=v_{i}$,}\\
    \cfrac{1}{(n-1)(n-3)} ,      & \text{if $x\in  V\setminus(\Gamma(u_{j})\cup\lbrace u_{j}\rbrace)$, $y\in\Gamma(u_{j})\setminus\lbrace v_{i},\hat{u_{j}}\rbrace$},\\
    \cfrac{2}{(n-1)(n-3)},   &\text{if $x\in  V\setminus(\Gamma(u_{j})\cup\lbrace u_{j}\rbrace)$, $y=v_{i}$},\\
    0,    & \text{otherwise}.
  \end{cases}
\end{eqnarray*}
    By the definition of $W(m_{v_{i}},m_{u_{j}})$, we obtain: 
    \begin{eqnarray*}
        W(m_{v_{i}},m_{u_{j}})&\leq&\cfrac{2}{(n-1)(n-3)} +\cfrac{n-5}{(n-1)(n-3)}\\
        &\ &+\cfrac{1}{(n-1)(n-3)}\times2\times(n-5)+\cfrac{2}{(n-1)(n-3)}\times2=\cfrac{3}{n-1},        
    \end{eqnarray*}
    which implies
    \begin{eqnarray*}
        \kappa(v_{i},u_{j})\geq1-\cfrac{3}{n-1}=\cfrac{n-4}{n-1}.
    \end{eqnarray*}

    \item $m=5,n=6$\\
   We consider a coupling between $m_{v_{i}}$ and $m_{u_{j}}$. We define a function $\psi:V\setminus(\Gamma(u_{j})\cup\lbrace u_{j}\rbrace)\to\Gamma(v_{i})\cap\Gamma(u_{j})$ by for any $x\in V\setminus(\Gamma(u_{j})\cup\lbrace u_{j}\rbrace)$, $d(x,\psi(x))=1$. It is clear that $\psi$ is a bijection map.  Our transfer plan for moving $m_{v_{i}}$ to $m_{u_{j}}$ should be as follows:
    \begin{enumerate}
\renewcommand{\labelenumi}{(\arabic{enumi})}
\item Move the mass of $\cfrac{1}{5}$ from $z$ to $z$ for any $z\in\Gamma(v_{i})\cap\Gamma(u_{j})$. The distance is $0$.

\item Move the mass of $\cfrac{1}{15}$ from $u_{j}$ to $y\in\Gamma(u_{j})$. The distance is $1$.

\item Using the mass at $x\in\ V\setminus(\Gamma(u_{j})\cup\lbrace u_{j}\rbrace)$. Move the mass of $\cfrac{2}{15}$ to $v_{i}$ and move the mass of $\cfrac{1}{15}$ to  $y=\psi(x)$. The distance is $1$.
\end{enumerate}

     By this transfer plan, we define a map $A:V\times V\to \mathbb{R}$ by
    \begin{eqnarray*}
  A(x,y)=
  \begin{cases}
    \cfrac{1}{5}, & \text{if $x=y\in\Gamma(v_{i})\cap\Gamma(u_{j})$}, \\
    \cfrac{1}{15},                 & \text{if $x=u_{j}$, $y\in\Gamma(u_{j})$}, \\
    \cfrac{2}{15},    &\text{if $x\in V\setminus(\Gamma(u_{j})\cup\lbrace u_{j}\rbrace), y=v_{i}$,}\\
    \cfrac{1}{15} ,      & \text{if $x\in V\setminus(\Gamma(u_{j})\cup\lbrace u_{j}\rbrace)$, $y=\psi(x)$},\\
    0,    & \text{otherwise}.
  \end{cases}
\end{eqnarray*}

   By the definition of $W(m_{v_{i}},m_{u_{j}})$, we obtain: 
    \begin{eqnarray*}
        W(m_{v_{i}},m_{u_{j}})\leq\cfrac{1}{15}\times3+\cfrac{2}{15}\times2+\cfrac{1}{15}\times2=\cfrac{3}{5}=\cfrac{3}{n-1},        
    \end{eqnarray*}
    which implies
    \begin{eqnarray*}
        \kappa(v_{i},u_{j})\geq1-\cfrac{3}{n-1}=\cfrac{n-4}{n-1}.
    \end{eqnarray*}

    \item $m\geq5,n\geq7$\par
    Let denote $u_{k},u_{l}\in\Gamma(u_{j})$ that each vertex has only one non-adjacent vertex $u_{p}\in\Gamma(u_{j})\cup\lbrace u_{j}\rbrace$.  
    We consider a coupling between $m_{v_{i}}$ and $m_{u_{j}}$. Our transfer plan for moving $m_{v_{i}}$ to $m_{u_{j}}$ should be as follows:

\begin{enumerate}
\renewcommand{\labelenumi}{(\arabic{enumi})}
\item Move the mass of $\cfrac{1}{n-1}$ from $z$ to $z$ for any $z\in\Gamma(v_{i})\cap\Gamma(u_{j})$. The distance is $0$.

\item 
Using the mass at $u_{j}$. Move the mass of $\cfrac{2}{(n-1)(n-3)}$ to $u_{k},u_{l}$ and the mass of $\cfrac{n-7}{(n-1)(n-3)}$ to $v_{i}$. The distance is $1$.\par
We remark that $ \cfrac{1}{n-1}-\cfrac{2}{(n-1)(n-3)}\times2\geq0$. 

\item Using the mass at $x\in V\setminus(\Gamma(u_{j})\cup\lbrace u_{j}\rbrace)$. Move the mass of $\cfrac{1}{(n-1)(n-3)}$ to $y\in\Gamma(u_{j})\setminus\lbrace v_{i},u_{k},u_{l}\rbrace$, and move the mass of $\cfrac{3}{(n-1)(n-3)}$ to $v_{i}$. The distance is $1$.
We remark that $ \cfrac{2}{n-1}-\cfrac{2}{(n-1)(n-3)}\times(n-6)\geq0$.
\end{enumerate}

    By this transfer plan, we define a map $A:V\times V\to \mathbb{R}$ by
    \begin{eqnarray*}
  A(x,y)=
  \begin{cases}
    \cfrac{1}{n-1}, & \text{if $x=y\in\Gamma(v_{i})\cap\Gamma(u_{j})$}, \\
    \cfrac{2}{(n-1)(n-3)},                 & \text{if $x=u_{j}$, $y\in\lbrace u_{k},u_{l}\rbrace$}, \\
    \cfrac{n-7}{(n-1)(n-3)},    &\text{if $x=u_{j}, y=v_{i}$,}\\
    \cfrac{1}{(n-1)(n-3)} ,      & \text{if $x\in V\setminus(\Gamma(u_{j})\cup\lbrace u_{j}\rbrace)$, $y\in\Gamma(u_{j})\setminus\lbrace v_{i},u_{k},u_{l}\rbrace$},\\
    \cfrac{3}{(n-1)(n-3)},   &\text{if $x\in  V\setminus(\Gamma(u_{j})\cup\lbrace u_{j}\rbrace)$,$y=v_{i}$},\\
    0,    & \text{otherwise}.
  \end{cases}
\end{eqnarray*}

    By the definition of $W(m_{v_{i}},m_{u_{j}})$, we obtain: 
    \begin{eqnarray*}
        W(m_{v_{i}},m_{u_{j}})&\leq&\cfrac{2}{(n-1)(n-3)}\times2 +\cfrac{n-7}{(n-1)(n-3)}\\
        &\ &+\cfrac{1}{(n-1)(n-3)}\times2\times(n-6)+\cfrac{3}{(n-1)(n-3)}\times2=\cfrac{3}{n-1},        
    \end{eqnarray*}
    which implies
    \begin{eqnarray*}
        \kappa(v_{i},u_{j})\geq1-\cfrac{3}{n-1}=\cfrac{n-4}{n-1}.
    \end{eqnarray*} 

Therefore, for any pair of neighboring vertices $v_{i},u_{j}$,
\begin{eqnarray*}
    \kappa(v_{i},u_{j})=\cfrac{n-4}{n-1}=\cfrac{\#(v_{i},u_{j})}{d_{v_{i}}\vee d_{u_{j}}}.
\end{eqnarray*}
       \end{itemize}

   Third, we consider any pair of neighboring vertices $u_{i},u_{j}$.

    \begin{itemize}
    \item $m=4$\\
 By assumption and Lemma\ref{サイクルの三角形数}, we know
    \begin{eqnarray*}
        d_{u_{i}}\vee d_{u_{j}}=d_{u_{i}}\wedge d_{u_{j}}=n-3,\#(u_{i},u_{j})=n-4.
    \end{eqnarray*}
        So, by Theorem \ref{リッチ曲率の下限} and Theorem \ref{リッチ曲率の上限},
    \begin{eqnarray*}
       \cfrac{n-4}{n-3}\geq
       \kappa(u_{i},u_{j})\geq -2\left( 1- \cfrac{1}{n-3} - \cfrac{1}{n-3} - \cfrac{n-4}{n-3} \right)_++ \cfrac{n-4}{n-3}= \cfrac{n-4}{n-3}. 
    \end{eqnarray*}
    Thus,
        \begin{eqnarray*}
        \kappa(u_{i},u_{j})=\cfrac{n-4}{n-3}=\cfrac{\#(u_{i},u_{j})}{d_{u_{i}}\vee d_{u_{j}}}.
    \end{eqnarray*}
    
    \item $m\geq5$\\
  By assumption and Lemma\ref{サイクルの三角形数}, we know
    \begin{eqnarray*}
        d_{u_{i}}\vee d_{u_{j}}=d_{u_{i}}\wedge d_{u_{j}}=n-3,\#(u_{i},u_{j})=n-5.
    \end{eqnarray*}   
     So, by Theorem \ref{リッチ曲率の下限} and Theorem \ref{リッチ曲率の上限},
    \begin{eqnarray*}
       \cfrac{n-5}{n-3}\geq
       \kappa(u_{i},u_{j})\geq -2\left( 1- \cfrac{1}{n-3} - \cfrac{1}{n-3} - \cfrac{n-5}{n-3} \right)_++ \cfrac{n-5}{n-3}= \cfrac{n-5}{n-3}. 
    \end{eqnarray*}
    Thus,
    \begin{eqnarray*}
        \kappa(u_{i},u_{j})=\cfrac{n-5}{n-3}=\cfrac{\#(u_{i},u_{j})}{d_{u_{i}}\vee d_{u_{j}}}.
    \end{eqnarray*}
    
    \end{itemize}

Finally, we consider any pair of neighboring vertices $u'_{i},u'_{j}$.
  By assumption and Lemma\ref{サイクルの三角形数}, we know
    \begin{eqnarray*}
        d_{u'_{i}}\vee d_{u'_{j}}=d_{u'_{i}}\wedge d_{u'_{j}}=n-3,\#(u'_{i},u'_{j})=n-6.
    \end{eqnarray*}   
     So, by Theorem \ref{リッチ曲率の下限} and Theorem \ref{リッチ曲率の上限},
    \begin{eqnarray*}
       \cfrac{n-6}{n-3}\geq
       \kappa(u'_{i},u'_{j})\geq -2\left( 1- \cfrac{1}{n-3} - \cfrac{1}{n-3} - \cfrac{n-6}{n-3} \right)_++ \cfrac{n-6}{n-3}= -2\left( \cfrac{1}{n-3} \right)_++ \cfrac{n-6}{n-3}. 
    \end{eqnarray*}
    Hence, we need to consider $\kappa(u'_{i},u'_{j})$ in the case of $m\geq6$.\par

\begin{itemize}
    \item $m=6$\\
    We consider a coupling between $m_{u'_{i}}$ and $m_{u'_{j}}$.
    We define a function $\phi:\Gamma(u'_{i})\setminus(\Gamma(u'_{j})\cup\lbrace u'_{j}\rbrace)\to\Gamma(u'_{j})\setminus(\Gamma(u'_{i})\cup\lbrace u'_{i}\rbrace)$ by for any $x\in\Gamma(u'_{i})\setminus(\Gamma(u'_{j})\cup\lbrace u'_{j}\rbrace)$, $d(x,\phi(x))=1$. It is clear that $\phi$ is a bijection map.   
    Our transfer plan for moving $m_{u'_{i}}$ to $m_{u'_{j}}$ should be as follows:
\begin{enumerate}
\renewcommand{\labelenumi}{(\arabic{enumi})}
\item Move the mass of $\cfrac{1}{n-3}$ from $z$ to $z$ for any $z\in\Gamma(u'_{i})\cap\Gamma(u'_{j})$. The distance is $0$.

\item Move the mass of $\cfrac{1}{3(n-3)}$ from $u'_{j}$ to $y\in\Gamma(u'_{j})\setminus\Gamma(u'_{i})$. The distance is $1$.

\item Using the mass at $x\in\Gamma(u'_{i})\setminus(\Gamma(u'_{j})\cup\lbrace u'_{j}\rbrace)$. Move the mass of $\cfrac{1}{3(n-3)}$ to $u'_{i}$, and move the mass of $\cfrac{2}{3(n-3)}$ to  $y=\phi(x)$. The distance is $1$.
\end{enumerate}
         By this transfer plan, we define a map $A:V\times V\to \mathbb{R}$ by
    \begin{eqnarray*}
  A(x,y)=
  \begin{cases}
    \cfrac{1}{n-3}, & \text{if $x=y\in\Gamma(u'_{i})\cap\Gamma(u'_{j})$}, \\
    \cfrac{1}{3(n-3)},                 & \text{if $x=u'_{j}$, $y\in\Gamma(u'_{j})\setminus\Gamma(u'_{i})$}, \\
    \cfrac{1}{3(n-3)},    &\text{if $x\in\Gamma(u'_{i})\setminus(\Gamma(u'_{j})\cup\lbrace u'_{j}\rbrace), y=u'_{i}$,}\\
    \cfrac{2}{3(n-3)} ,      & \text{if $x\in\Gamma(u'_{i})\setminus(\Gamma(u'_{j})\cup\lbrace u'_{j}\rbrace)$, $y=\phi(x)$},\\
    0,    & \text{otherwise}.
  \end{cases}
\end{eqnarray*}

   By the definition of $W(m_{u'_{i}},m_{u'_{j}})$, we obtain: 
    \begin{eqnarray*}
        W(m_{u'_{i}},m_{u'_{j}})\leq\cfrac{1}{3(n-3)}\times3+\cfrac{1}{3(n-3)}\times2+\cfrac{2}{3(n-3)}\times2=\cfrac{3}{n-3},        
    \end{eqnarray*}
    which implies
    \begin{eqnarray*}
        \kappa(u'_{i},u'_{j})\geq1-\cfrac{3}{n-3}=\cfrac{n-6}{n-3}.
    \end{eqnarray*}   

    \item $m\geq7$, and of all the non-adjacent vertices of $u'_{i}$ or $u'_{j}$, only two are non-adjacent\\

    Let $\hat{u_{i}}\in\Gamma(u'_{i})$ denote the vertex described above.    
    We consider a coupling between $m_{u'_{i}}$ and $m_{u'_{j}}$. Our transfer plan for moving $m_{u'_{i}}$ to $m_{u'_{j}}$ should be as follows:
\begin{enumerate}
\renewcommand{\labelenumi}{(\arabic{enumi})}
\item Move the mass of $\cfrac{1}{n-3}$ from $z$ to $z$ for any $z\in\Gamma(u'_{i})\cap\Gamma(u'_{j})$. The distance is $0$.

\item Move the mass of $\cfrac{1}{n-3}$ from $\hat{u_{i}}$ to $u'_{i}$. The distance is $1$.

\item Move the mass of $\cfrac{1}{2(n-3)}$ from $x\in\Gamma(u'_{i})\setminus(\Gamma(u'_{j})\cup\lbrace \hat{u_{i}}\rbrace)$ to $y\in\Gamma(u'_{j})\setminus(\Gamma(u'_{i})\cup\lbrace u'_{i}\rbrace)$. The distance is $1$.
\end{enumerate}

By this transfer plan, we define a map $A:V\times V\to \mathbb{R}$ by
   \begin{eqnarray*}
  A(x,y)=
  \begin{cases}
    \cfrac{1}{n-3}, & \text{if $x=y\in\Gamma(u'_{i})\cap\Gamma(u'_{j})$}, \\
    \cfrac{1}{n-3},                 & \text{if $x=\hat{u_{i}}$, $y=u'_{i}$}, \\
    \cfrac{1}{2(n-3)},    &\text{if $x\in\Gamma(u'_{i})\setminus(\Gamma(u'_{j})\cup\lbrace \hat{u_{i}}\rbrace), y\in\Gamma(u'_{j})\setminus(\Gamma(u'_{i})\cup\lbrace u'_{i}\rbrace)$,}\\
    0,    & \text{otherwise}.
  \end{cases}
\end{eqnarray*}

   By the definition of $W(m_{u'_{i}},m_{u'_{j}})$, we obtain: 
    \begin{eqnarray*}
        W(m_{u'_{i}},m_{u'_{j}})\leq\cfrac{1}{n-3}+\cfrac{1}{2(n-3)}\times2\times2=\cfrac{3}{n-3},        
    \end{eqnarray*}
    which implies
    \begin{eqnarray*}
        \kappa(u'_{i},u'_{j})\geq1-\cfrac{3}{n-3}=\cfrac{n-6}{n-3}.
    \end{eqnarray*}    

    \item $m\geq8$, and all the non-adjacent vertices of $u'_{i}$ or $u'_{j}$ are adjacent to each other\\
    We consider a coupling between $m_{u'_{i}}$ and $m_{u'_{j}}$. Our transfer plan for moving $m_{u'_{i}}$ to $m_{u'_{j}}$ should be as follows:

    \begin{enumerate}
\renewcommand{\labelenumi}{(\arabic{enumi})}
\item Move the mass of $\cfrac{1}{n-3}$ from $z$ to $z$ for any $z\in\Gamma(u'_{i})\cap\Gamma(u'_{j})$. The distance is $0$.

\item Move the mass of $\cfrac{1}{3(n-3)}$ from $x\in\Gamma(u'_{i})\setminus\Gamma(u'_{j})$ to $y\in\Gamma(u'_{j})\setminus\Gamma(u'_{i})$. The distance is $1$.
\end{enumerate}

  By this transfer plan, we define a map $A:V\times V\to \mathbb{R}$ by
   \begin{eqnarray*}
  A(x,y)=
  \begin{cases}
    \cfrac{1}{n-3}, & \text{if $x=y\in\Gamma(u'_{i})\cap\Gamma(u'_{j})$}, \\
    \cfrac{1}{3(n-3)},    &\text{if $x\in\Gamma(u'_{i})\setminus\Gamma(u'_{j}), y\in\Gamma(u'_{j})\setminus\Gamma(u'_{i})$,}\\
    0,    & \text{otherwise}.
  \end{cases}
\end{eqnarray*}

   By the definition of $W(m_{u'_{i}},m_{u'_{j}})$, we obtain: 
    \begin{eqnarray*}
        W(m_{u'_{i}},m_{u'_{j}})\leq\cfrac{1}{3(n-3)}\times3\times3=\cfrac{3}{n-3},        
    \end{eqnarray*}
    which implies
    \begin{eqnarray*}
        \kappa(u'_{i},u'_{j})\geq1-\cfrac{3}{n-3}=\cfrac{n-6}{n-3}.
    \end{eqnarray*} 

    Therefore, for any pair of neighboring vertices $u'_{i},u'_{j}$,
    \begin{eqnarray*}
        \kappa(u'_{i},u'_{j})=\cfrac{n-6}{n-3}=\frac{\#(u'_{i},u'_{j})}{d_{u'_{i}}\vee d_{u'_{j}}}.
    \end{eqnarray*}

\end{itemize}

This completes the proof.
\end{proof}

\begin{figure}[H]
\centering
\begin{minipage}[b]{0.45\columnwidth}
    \centering
    \includegraphics[width=1.0\columnwidth]{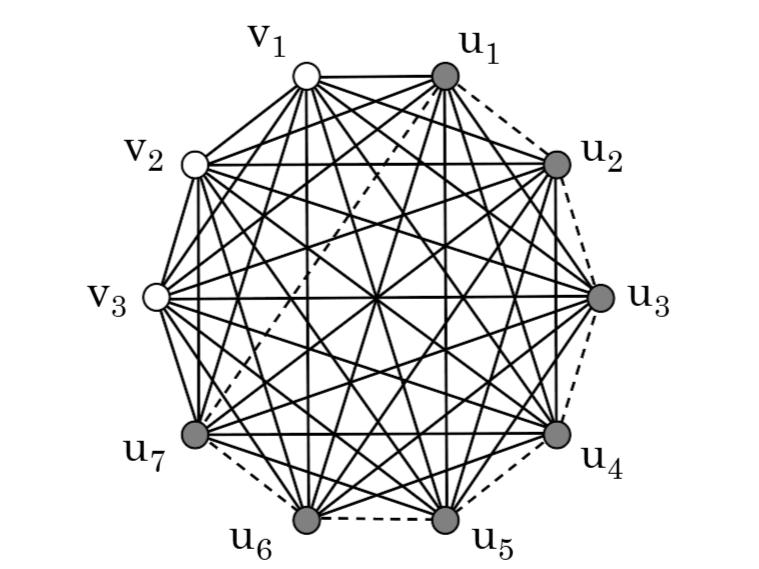}
    \caption{$K_{10}-C_{7}$}
    \label{サイクルの例１}
\end{minipage}
\begin{minipage}[b]{0.45\columnwidth}
    \centering
    \includegraphics[width=1.0\columnwidth]{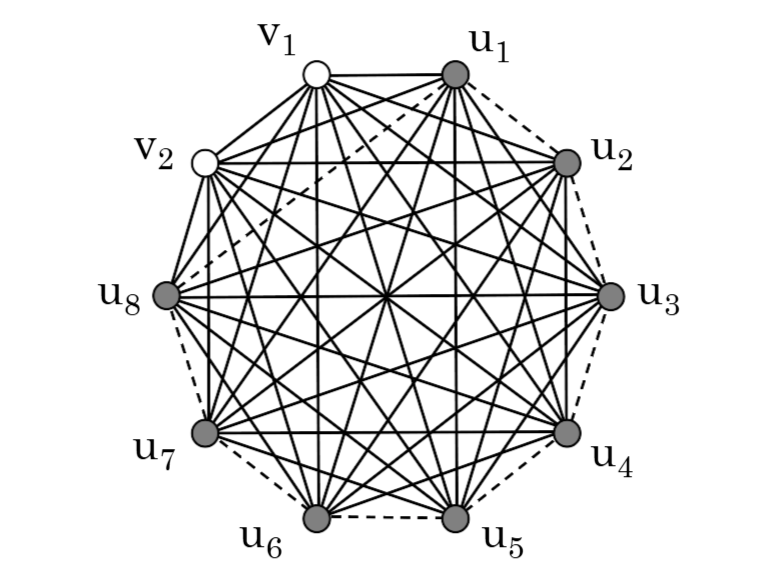}
    \caption{$K_{10}-C_{8}$}
    \label{サイクルの例２}
\end{minipage}
\end{figure}

   We consider the graphs in Figure\ref{サイクルの例},\ref{サイクルの例１},\ref{サイクルの例２}. A pair of $u_{1}$ and $u_{3}$ has a vertex $u_{2}$ which is not adjacent to the two vertices included in the pair, so that the pair is denoted as $u_{1},u_{3}$. A pair of $u_{1}$ and $u_{4}$ has no vertex that is not adjacent to two vertices included in the pair; thus, the pair is denoted as $u'_{1},u'_{4}$. Also, we consider the graph in Figure\ref{サイクルの例２}. For a pair of $u'_{1},u'_{4}$, two of all vertices the are non-adjacent to those two vertices are not adjacent. For a pair of $u'_{1},u'_{5}$, all vertices that are non-adjacent to these two vertices are adjacent.

\begin{corollary}
    We consider a graph $K_{n}-C_{n},(n\geq6)$.
    For some pairs of neighboring vertices, denote $u_{i},u_{j}$ if there is one vertex that is not adjacent to $u_{i}$ and $u_{j}$, and denote $u'_{i},u'_{j}$ if there is no vertex that is not adjacent to $u'_{i}$ and $u'_{j}$.
    Then for any neighboring $x,y$, we have
     \begin{eqnarray*}
        \kappa(x,y)=\cfrac{\#(x,y)}{d_{x}\vee d_{y}}=
        \begin{cases}
    \cfrac{n-5}{n-3} ,      & \text{if $x=u_{i}$, $y=u_{j}$ },\\
    \cfrac{n-6}{n-3} , &\text{if $x=u'_{i},y=u'_{j}$}.
  \end{cases}
    \end{eqnarray*} 
\end{corollary}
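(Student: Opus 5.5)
This corollary is the case $m=n$ of the preceding proposition on $K_{n}-C_{m}$. The plan is to check which cases of that proposition survive, and to make sure its proof still works when there are no full-degree vertices. When $m=n$, no vertex has degree $n-1$: every vertex is some $u_i$ of degree $n-3$. Hence the cases $x=v_i,y=v_j$ and $x=v_i,y=u_j$ are vacuous. Since $n\geq 6>4$, the case $m=4$ is also excluded. Only two types of edge remain: pairs $u_i,u_j$ whose two missing neighbours overlap in exactly one vertex, and pairs $u'_i,u'_j$ with no common non-neighbour.

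For a pair $u_i,u_j$: being adjacent in $K_n-C_n$ with a common non-neighbour means they are at cycle distance two. Lemma \ref{サイクルの三角形数} (case $m\geq5$) then gives $\#(u_i,u_j)=n-5$, and both degrees equal $n-3$. The two-sided estimate is the same one used in the proposition. Theorem \ref{リッチ曲率の上限} gives $\kappa\le \frac{n-5}{n-3}$. Theorem \ref{リッチ曲率の下限} has positive-part term
\[
1-\tfrac{2}{n-3}-\tfrac{n-5}{n-3}=0,
\]
so it gives $\kappa\ge\frac{n-5}{n-3}$, and equality follows.

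For a pair $u'_i,u'_j$, the lemma gives $\#(u'_i,u'_j)=n-6$. The upper bound $\frac{n-6}{n-3}$ is again immediate from Theorem \ref{リッチ曲率の上限}. The lower bound from Theorem \ref{リッチ曲率の下限} is short by $\frac{2}{n-3}$, so I would reuse the explicit couplings constructed in the proposition. These show $W(m_{u'_i},m_{u'_j})\le \frac{3}{n-3}$ in each of three sub-cases, split by how the four non-neighbours $a_1,a_2$ (of $u'_i$) and $b_1,b_2$ (of $u'_j$) sit on the cycle. Each coupling only used distances among $u'_i,u'_j$, their common neighbours and these four vertices. It never used a degree-$(n-1)$ vertex, so each transfers verbatim to $m=n$.

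The main obstacle is making sure the sub-cases exhaust all configurations when $m=n\geq 6$. This requires that every required edge among $\{a_1,a_2,b_1,b_2,u'_i,u'_j\}$ is present. For $n=6$, a pair at cycle distance three is the $m=6$ sub-case, with the bijection $\phi$. For larger $n$, the pair is at cycle distance $k\ge3$. Distance exactly $3$ makes one non-neighbour of $u'_i$ adjacent on the cycle to one of $u'_j$, which is the "only two are non-adjacent" sub-case. Distance at least $4$ (possible once $n\geq 8$) makes all four mutually adjacent in the complement graph, which is the last sub-case. Having verified this, combining the bounds gives $\kappa(u'_i,u'_j)=\frac{n-6}{n-3}=\frac{\#(u'_i,u'_j)}{d_{u'_i}\vee d_{u'_j}}$, which completes the corollary.
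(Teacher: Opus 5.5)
Your proposal is correct and follows the paper's route. The paper treats the corollary as the case $m=n$ of the proposition on $K_n-C_m$: the $v$-cases and the $m=4$ case are vacuous, the $u_i,u_j$ edges are settled by the two Jost--Liu bounds, and the $u'_i,u'_j$ edges by the three explicit couplings giving $W\le \frac{3}{n-3}$. The paper leaves implicit two points you check explicitly, and both checks are right. First, the couplings never use degree-$(n-1)$ vertices, so they apply even though the lemma and proposition are stated only for $m\le n-1$. Second, the three sub-cases exhaust cycle distances $k\ge 3$.
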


\section{Conclusion}
\label{Conclusion}
In this study, we investigated how the Ollivier–Ricci curvature changes when edges are removed from a complete graph, which is known to have the highest curvature, in various patterns.
We found that the Ollivier–Ricci curvature remains positive not only when edges are removed uniformly from each vertex of the complete graph (as in the case of matching edges) but also when edges are removed from specific vertices (as in the case of cycles). 
Furthermore, we prove that the Ricci curvature remains non-negative even when edges are removed intensively from a single vertex. 
This confirms that a complete graph possesses an extremely robust structure. Furthermore, to consider the conditions for positive or negative curvature, we examine the necessity of removing more edges from complete graphs or devising methods for removing edges.

\section*{Funding}
The second named author was supported in part by JSPS KAKENHI [grant number 25K17253].

\section*{Conflict of interest}
None.

\bibliography{bibliography}
\bibliographystyle{plain}

\address{
Interdisciplinary Faculty of Science and Engineering\\
Shimane University\\
Matsue 690-8504 \\
Japan
}
{n25m001@matsu.shimane-u.ac.jp}

\address{%
Interdisciplinary Faculty of Science and Engineering\\
Shimane University\\
Matsue 690-8504 \\
Japan
}
{taiki\_yamada@riko.shimane-u.ac.jp}

\end{document}